\documentclass[11pt]{article}
\usepackage[utf8]{inputenc}
\usepackage{lmodern}
\usepackage{subfiles}
\usepackage{enumitem}
\setenumerate{topsep=6pt,ref={\normalfont(\roman*)},label={\normalfont(\roman*)}, itemsep=0pt}
\usepackage{pgfplots}
\pgfplotsset{compat=1.18}
\usepgfplotslibrary{groupplots}
\usepackage{amsfonts}
\usepackage{amsthm}
\usepackage{amsmath}
\usepackage{amssymb}
\usepackage{amscd}
\usepackage{mathrsfs}
\usepackage{mathtools}
\usepackage{bbm}
\usepackage{esint}

\usepackage[margin=3cm]{geometry}
\usepackage{setspace}
\usepackage{indentfirst}
\usepackage{graphicx}
\usepackage{graphics}
\usepackage{lscape}
\usepackage{pgf,tikz}
\usepackage{tikz-cd}
\usepackage{color}
\usepackage{pict2e}
\usepackage{epic}
\usepackage{epstopdf}
\usepackage{titlesec, titlefoot}
\titleformat{\section}[block]{\Large\bfseries\filcenter}{\thesection}{1em}{}
\usepackage{commath}
\usepackage{float}
\usepackage{caption}
\usepackage{etoolbox}
\usepackage[affil-it]{authblk}
\usepackage{combelow}

\usepackage[hidelinks, bookmarksdepth=3]{hyperref}
\hypersetup{bookmarksopen=true} 
\usepackage{hypcap}

\graphicspath{{./Pictures/}}
\allowdisplaybreaks

\expandafter\def\expandafter\normalsize\expandafter{%
\normalsize
\setlength\abovedisplayskip{6pt}
\setlength\belowdisplayskip{6pt}
\setlength\abovedisplayshortskip{6pt}
\setlength\belowdisplayshortskip{6pt}
}

\theoremstyle{plain}

\renewcommand*\thesection{\arabic{section}}
\numberwithin{equation}{section} 

\newtheorem{theorem}{Theorem}[section]
\newtheorem{lemma}[theorem]{Lemma}
\newtheorem*{lemma*}{Lemma}
\newtheorem{proposition}[theorem]{Proposition}

\theoremstyle{definition}

\newtheorem{remark}[theorem]{Remark}

\expandafter\let\expandafter\oldproof\csname\string\proof\endcsname
\let\oldendproof\endproof
\renewenvironment{proof}[1][\proofname]{%
\oldproof[\upshape \bfseries #1]%
}{\oldendproof}

\makeatletter
\def\@makechapterhead#1{%
\vspace*{50\p@}%
{\parindent \z@ \raggedright \normalfont
\interlinepenalty\@M
\Huge\bfseries  \thechapter.\quad #1\par\nobreak
\vskip 40\p@
}}
\makeatother

\newcommand{\eps}{\varepsilon}

\DeclareMathOperator{\ddiv}{div}

\DeclareMathOperator{\SO}{SO}

\def \Om{\Omega}
\def \R {\mathbb{R}}

\def \d{\,\textup{d}}

\def \p{\partial}
\def \mc{\mathcal}
\def \mb{\mathbb}

\def \Id{\textup{Id}}

\newcommand{\di}{\mathrm{div}}

\begin{document}

	\title{\textbf{Nonconstant weakly harmonic maps with constant trace}}
	
	\author[1]{{\Large David Ziener}}
	
	\affil[1]{\small Max Planck Institute for Mathematics in the Sciences, Inselstrasse 22, 04103 Leipzig, Germany
	\protect\\
	{\tt{david.ziener@mis.mpg.de}}\ }
	
	\date{}
	
    \maketitle

	\begin{abstract}
    We construct a nonconstant weakly harmonic map from a three-dimensional ball into the two-sphere with constant trace. This gives an affirmative answer to a question raised by Rivière in 1995.
    \end{abstract}

    \medskip

    \noindent\textbf{Mathematics Subject Classification (2020).} 58E20, 35J50, 49J45.

    \smallskip

    \noindent\textbf{Keywords and phrases.}
    Weakly harmonic maps, constant trace, singularities, Dirichlet energy, energy concentration, symmetry.
	
	\unmarkedfntext{
	\hspace{-0.75cm}
	\emph{Acknowledgments.} DZ would like to thank his PhD advisor László Székelyhidi for his continued support and fruitful discussions regarding this work.
	}
	
	\vspace{1cm}

    \section{Preliminaries}
    
    For an open set $\Om \subset \mb R^d$, where in this paper $d \in \{2,3\}$, we denote
    \[
    H^1(\Om;\mb S^2):=\{u \in H^1(\Om;\mb R^3):|u|=1,\;\text{a.e.}\},\quad E_\Om(u)=\int_{\Om}|\nabla u|^2dx.
    \]
    A map $u \in H^1(\Om;\mb S^2)$ is weakly harmonic if
    \begin{equation}
        \label{eq:weakHarm}
        \int_{\Om}\nabla u \cdot \nabla \phi dx=\int_{\Om}|\nabla u|^2 u \cdot \phi dx\quad \forall \phi \in C_c^\infty(\Om;\mb R^3).
    \end{equation}
    Since the beautiful construction due to Rivi\`ere \cite{ever}, it is known that, in dimension three, weakly harmonic maps need not possess any partial regularity: In his paper Rivi\`ere constructs, for any smooth nonconstant boundary data on the boundary of a ball, a weakly harmonic map which is discontinuous everywhere. His construction is based on the relative relaxed energy as introduced by Bethuel, Brezis and Coron in \cite{BBC}. The relative relaxed energy turned out to be a useful tool for constructing harmonic maps with singularities. It was used by Hardt, Lin and Poon \cite{HLP} to construct axially symmetric harmonic maps with prescribed singularities on the axis. In \cite{Rline} Rivi\`ere iterated this construction to find harmonic maps with a line of singularities and finally, it played an important role in his celebrated construction \cite{ever}. We give the very rough idea behind the role of the relative relaxed energy in these proofs.\\
    On a smooth bounded domain $\Om \subset \mb R^3$, and for a fixed reference map $v \in H^1(\Om;\mb S^2)$ with smooth trace, the relaxed energy is given as
    \[
    F_v(u):=E_\Om(u)+8\pi L(u,v),\quad u \in H^1(\Om;\mb S^2),\quad u-v \in H^1_0(\Om;\mb R^3),
    \]
    where $L(u,v)$ denotes the length of the minimal connection associated with the difference of the topological singularities of $u$ and $v$, counted with their degrees, see \cite[Section~3]{BBC} for more details. In \cite[Section~3]{BBC} it is also shown that there exists a minimizer for
    \begin{equation}
        \label{minRelaxEnergy}
        \inf\{F_v(u):u \in H^1(\Om;\mb S^2),\; u-v \in H^1_0(\Om;\mb R^3)\},
    \end{equation}
    and every such minimizer is weakly harmonic. For maps $u$ with the same trace and the same distributional topological singularities as $v$, one has $L(u,v)=0$ and hence $F_v(u)=E_\Om(u)$. Thus, \eqref{minRelaxEnergy} can be seen as minimizing the Dirichlet energy among maps with fixed trace and prescribed singularities, through the singular set of $v$. The difficulty one faces now is that a minimizer of the relaxed problem \eqref{minRelaxEnergy} need not retain these singularities. To overcome this issue, the assumption that the trace is nonconstant is essential for the constructions in \cite{HLP,Rline,ever}. The key comparison in \cite[Lemma~7.1]{HLP} and \cite[Lemma~A1]{ever} allows one to insert a small dipole into a map with nonvanishing gradient at an energy increase strictly smaller than $8\pi$ times the length of the dipole. This strict inequality is crucial for obtaining singular minimizers of the relaxed energy. Hence, such a minimizer is a weakly harmonic map with nonempty singular set.\\
    For constant boundary data, however, the result of Brezis, Coron and Lieb \cite[Example~4 and Theorem~1.1]{BCL} shows that such a construction is in general not possible: On a ball $B \subset \mb R^3$, the infimum of the Dirichlet energy among maps with constant trace and exactly two prescribed point singularities of degrees $+1$ and $-1$ at $P$ and $N$, respectively, is
    \[
    \inf E_B(u)=8\pi|P-N|.
    \]
    This infimum is not attained: A minimizing sequence converges weakly to the constant map, while its energy concentrates along the segment joining the two singularities; see \cite[Section~VI]{BCL}. Thus, the prescribed singularities are not preserved under weak $H^1$ convergence. The same can be observed in the axially symmetric class.\\
    In view of these effects, it is natural to ask whether there exists a weakly harmonic map on a ball with constant trace and nonempty singular set. Since every regular weakly harmonic map on a ball with constant trace is constant (see for example \cite[Theorem~1]{ChouZhu1995}), we can ask equivalently:
    \[
    \textit{Do there exist nonconstant weakly harmonic maps on a ball with constant trace?}
    \]
    This question is posed explicitly in \cite[p.~198]{ever}. In this paper we provide a positive answer by constructing a nonconstant weakly harmonic map with constant trace.\\
    Before we state the main result precisely, let us give a brief overview over some connected results. Related work on constant boundary data includes the constancy results for smooth harmonic maps of Karcher and Wood~\cite{KarcherWood1984}
    and those for smooth harmonic maps and critical points for nematic liquid-crystals of Chou and Zhu~\cite{ChouZhu1995}. For a class of quasilinear elliptic systems, Shen and
    Yan~\cite{ShenYan1993} constructed nonconstant solutions with constant
    boundary values. \\
    Using a higher-dimensional analogue of the relaxed energy, Pakzad \cite{Pakzad2001} obtained infinitely many weakly harmonic extensions into $\mb S^2$ for nonconstant smooth boundary data in dimensions $n\geq3$. A complementary approach to relaxation for maps into spheres was
    developed by Giaquinta, Modica and Sou\v{c}ek~\cite{GMS1989} using
    the theory of Cartesian currents. Regularity questions for related relaxed
    variational problems were studied by Bethuel and
    Brezis~\cite{BethuelBrezis1991}.
    
    We denote $B_r:=B_r(0)\subset \mb R^3$ and by $(e_1,e_2,e_3)$ the standard basis of $\mb R^3$. The main result of this paper is the following theorem.
    \begin{theorem}
        \label{thm:main}
        There exists a weakly harmonic map $u \in H^1(B_2;\mb S^2)$ with $u-e_3\in H^1_0(B_2;\mb R^3)$ and $E_{B_2}(u)>0$. In particular, $u$ is nonconstant with constant trace on $\p B_2$.
    \end{theorem}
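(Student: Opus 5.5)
Here is how I would try to prove Theorem~\ref{thm:main}. I would construct $u$ by minimizing the Dirichlet energy in a symmetric class, and use the principle of symmetric criticality to get weak harmonicity. The preliminaries stress two facts. With constant trace, dipoles collapse, because minimizing sequences concentrate on segments. Relaxed minimizers, on the other hand, do retain singularities once the trace is nonconstant. So the plan is to let the \emph{interior} play the role of nonconstant boundary data.

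\textbf{Setup.} I would split $B_2$ into the inner ball $B_1$ and the annulus $A=B_2\setminus \overline{B_1}$. I would choose a symmetry group $G$ acting on domain and target, say rotations about the $e_3$ axis combined with a reflection $x_3\mapsto -x_3$ acting as $u\mapsto -\bar u$ or as the equivariant map $R\circ u\circ R^{-1}$, such that the constant $e_3$ is $G$-equivariant. I would then minimize $E_{B_2}$ over equivariant maps with trace $e_3$ plus a topological constraint that is closed under weak $H^1$ convergence. Natural candidates for the constraint are:
\begin{itemize}
\item a prescribed degree on spheres $\partial B_r$, which is not weakly closed in 3D;
\item better, a constraint on the \emph{trace on the equatorial plane}, for instance that $u$ maps a curve or surface in the fixed set of the symmetry to a fixed subset of $\mb S^2$.
\end{itemize}
A cleaner version of the second idea: in the equivariant class, $u$ restricted to the plane $\{x_3=0\}$ is forced by symmetry to take values in a great circle. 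Minimizers of the symmetric problem are then weakly harmonic by Palais' symmetric criticality, provided $G$ acts by isometries on $H^1$ and the energy is invariant. The key issue is that such a class might still contain the constant $e_3$. So the symmetry has to exclude constants, for instance by requiring $u(-x)=-u(x)$ on the midplane set, which would contradict the trace $e_3$ unless we reflect the target as well.

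\textbf{Fallback.} If a clean symmetry excluding constants is unavailable, I would do a two-step construction. First take Rivière/HLP type minimizers of the relaxed energy on $B_1$ with nonconstant equivariant boundary data $g$ and nonempty singular set; these exist by \cite[Lemma~A1]{ever}. Then solve the outer problem on $A$ with inner trace $g$ and outer trace $e_3$, and optimize over $g$ in a symmetric finite-dimensional or compact family. At a critical $g$ the normal derivatives match across $\partial B_1$, so the glued map is weakly harmonic in $B_2$. To produce the critical $g$ I would use a mountain-pass or min-max argument in the parameter.

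\textbf{Main obstacle.} Nonconstancy, i.e.\ preventing the minimizing or critical object from collapsing to $e_3$, is the hardest step. I would attack it by a strict energy comparison. The constant has zero energy, so a pure minimization never works. One therefore needs either a constraint that excludes constants and is preserved under weak limits (in the equivariant class, via trace on an invariant lower-dimensional set, where compactness is better), or a min-max level that is strictly positive. For that positive level I would try to show it is below the $8\pi\cdot$length concentration threshold of \cite{BCL}, so that bubbling/line concentration is excluded. The limit is then a genuine nonconstant weakly harmonic map with $E_{B_2}(u)>0$.
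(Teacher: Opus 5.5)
Your framework (minimize $E_{B_2}$ over a symmetric class with trace $e_3$ and a topological constraint) is the paper's, and you rightly single out nonconstancy as the crux. However, the proposal never supplies a mechanism that resolves it, and the ones you suggest fail.

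\textbf{Symmetry cannot exclude the constant.} If $G$ preserves $\p B_2$ and $u=e_3$ there, equivariance at boundary points forces every target action to fix $e_3$. So the constant map $e_3$ is itself equivariant, and symmetry alone never excludes it. Your midplane condition $u(-x)=-u(x)$ already contradicts $u=e_3$ on $\p B_2\cap\{x_3=0\}$, and reflecting the target to repair this readmits $e_3$.

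\textbf{Trace constraints on invariant sets do not help.} A constraint on the trace on an invariant surface is either implied by the symmetry, hence satisfied by $e_3$, or it is an extra constraint. In the second case minimizers carry a Lagrange multiplier (a jump of the normal derivative) on that surface and in general are not weakly harmonic across it. On curves, $H^1$ maps have no trace, so such constraints are not weakly closed.

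\textbf{Charge constraints and the threshold.} Degree or charge constraints are not weakly closed either, and full axial symmetry does not prevent the Brezis--Coron--Lieb collapse. With prescribed charges alone, every admissible map has $E\ge 8\pi L$ and collapsing sequences approach this bound. So there is no level ``below the $8\pi\cdot$length threshold'' unless the class forces every collapse to concentrate on a set strictly longer than the minimal connection.

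Producing such a strict gap is the missing idea. The paper obtains it from the discrete symmetry $v\circ g=R\circ v$ combined with charges $+1$ at $\pm e_1$ and $-1$ at $\pm e_3$:
\begin{enumerate}
\item the $D_3$-flux gives horizontal slices of degree $\pm1$ for $|t|<1$;
\item Lemma~\ref{lem:OddDegree} turns odd degree plus symmetry into $v(0,0,t)=-e_3$;
\item Lemma~\ref{lem:disk-energy} then costs almost $8\pi$ per slice for maps $L^2$-close to $e_3$;
\item the $D_1$-flux adds another $16\pi$.
\end{enumerate}
Hence collapse costs at least $32\pi$ (Proposition~\ref{prop:Constant-expensive}), strictly more than the minimal-connection bound $16\sqrt2\pi$. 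An explicit competitor with energy below $32\pi$ (Proposition~\ref{prop:gap}) then forces the weak limit of a minimizing sequence to be nonconstant, even though that limit may leave the class.

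\textbf{Weak harmonicity of the limit.} This last point exposes a second gap. Palais' principle yields criticality for a minimizer that lies in the class, along variations that stay in it. Because the charge constraint is not weakly closed, the limit $u$ need not be admissible; the paper does not know whether $u$ keeps the prescribed singularities. The paper therefore varies the minimizing sequence rather than the limit:
\begin{itemize}
\item For $\psi$ with $\psi\circ g=R\circ\psi$, the rotation fields $Q_t$ with $\p_t(Q_ta)=\psi\times Q_ta$ keep $Q_tv_n$ in the class, since they preserve $\di D$ by Lemma~\ref{lem:RotPreserveDiv}. Hence $E_{B_2}(Q_tv_n)-E_{B_2}(v_n)\ge m-E_{B_2}(v_n)$.
\item The difference is linear in $\nabla v_n$ with strongly convergent coefficients, plus a strongly convergent term. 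So it passes to the weak limit.
\item This yields $\di(u\times\nabla u)=0$, first for symmetric $\psi$ and then for all $\psi$ after averaging $\psi$ with $R^{-1}\psi\circ g$.
\end{itemize}

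\textbf{The fallback does not close either gap.} Stationarity in a finite-dimensional or compact family of inner traces $g$ only annihilates the component of the normal-derivative jump across $\p B_1$ along the tangent directions of that family, not the jump itself. Over all $g$ the infimum is $0$, attained at $g\equiv e_3$, so a min-max geometry would be needed and none is exhibited. Keeping a min-max limit away from $e_3$ is again the unresolved nonconstancy problem.
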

    Our proof uses exactly the idea of minimizing the Dirichlet energy over an admissible class with prescribed singularities and constant trace. So the central question is of course how we can overcome this concentration in minimizing sequences as observed in \cite{BCL}. To explain this we first define the admissible class of maps $\mc C$.\\ Denote the domain symmetry
    \[
    g:\mb R^3 \to \mb R^3,\quad g(x,y,z)=(-x,-y,z)
    \]
    and, for $\alpha\in\mb R$, the target rotation $R_\alpha\in\SO(3)$ through angle $\alpha$ about the $e_3$-axis,
    \[
    R_\alpha(v_1,v_2,v_3)
    =(v_1\cos\alpha-v_2\sin\alpha,\,
      v_1\sin\alpha+v_2\cos\alpha,\,v_3).
    \]
    We write for simplicity $R=R_\pi$, so $R(v_1,v_2,v_3)=(-v_1,-v_2,v_3)$.
    Let $\Om\subset \mb R^3$ be open. For $v \in H^1_\text{loc}(\Om;\mb S^2)$ we define the vector field
    \begin{equation}
        \label{eq:DefJacVec}
        D(v)=(v\cdot (v_y\times v_z),v\cdot (v_z\times v_x),v\cdot (v_x\times v_y)) \in L^1_{loc}(\Om;\mb R^3),
    \end{equation}
    as introduced in \cite{BCL}. This is well-defined almost everywhere and we have
    \begin{equation}
        \label{ineq:UpperBoundJac}
        |D(v)|\leq \frac{1}{2}|\nabla v|^2,
    \end{equation}
    see \cite[equation~(4.1)]{BCL}.\\
    From now on fix once and for all some $p \in (2,3)$. We define $\mc C$ to be the class of maps $v:\mb R^3\to \mb S^2$ such that
    \begin{align}
        &v-e_3 \in W^{1,p}(\mb R^3;\mb R^3),\quad v=e_3\text{ a.e. on }\mb R^3\setminus B_2\nonumber \\
        &v\circ g=R\circ v,\text{ a.e. on }\mb R^3\\
        &\di D(v)=4\pi(\delta_{e_1}+\delta_{-e_1}-\delta_{e_3}-\delta_{-e_3})\text{ in }\mc D'(\mb R^3).\nonumber
    \end{align}
    \begin{remark}
        The additional regularity of $v-e_3$ beyond $H^1$ is not strictly necessary for the proof, but it makes some of the later arguments simpler due to the existence of continuous slices. The equation for $\di D(v)$ gives a concise way of prescribing topological charges for $v \in \mc C$.
    \end{remark}
    With this we can outline the strategy behind the proof. The key observation is that the symmetry condition and its interplay with the prescribed charges in $\mc C$ make the collapse of sequences in $\mc C$ to a constant energetically very expensive. This is shown in Proposition \ref{prop:Constant-expensive}. In fact, the precise cost of such a collapse is bounded below by $32\pi$. Now observe the following: For maps $v \in \mc C$ the minimal connection has length $2\sqrt{2}$, as the distance from either positive singularity to either negative singularity is $\sqrt{2}$. The lower bound of Brezis, Coron and Lieb \cite[Sections~II and~IV]{BCL} therefore gives
    \[
    E_{B_2}(v)\geq 8\pi\,(2\sqrt{2})=16\sqrt{2}\pi.
    \]
    Hence, there is hope to actually find a map $v \in \mc C$, which has
    \[
    E_{B_2}(v)<32\pi,
    \]
    and thus prevents a minimizing sequence in $\mc C$ from collapsing to a constant. This competitor is constructed in Section \ref{sec:Competitor}. In Section \ref{sec:Main} we complete the proof as follows: First, by combining the observations of Section \ref{sec:LowerBound} and \ref{sec:Competitor} it is not difficult to see that a minimizing sequence for the Dirichlet energy in $\mc C$ converges (up to a subsequence) weakly in $H^1$ to a nonconstant map $u$ with constant trace. Finally, a first-variation argument involving local rotations that preserve $\mc C$ establishes weak harmonicity of $u$.

    \section{Lower bound}
    \label{sec:LowerBound}
    For a continuous map $w:\mb R^2\to\mb S^2$ equal to $e_3$ outside a compact set, we denote by $\deg w$ the degree of its continuous extension to the one-point compactification $\mb R^2\cup\{\infty\}\cong\mb S^2$, with $w(\infty)=e_3$. For a smooth map $f:\mb R^2\to\mb S^2$ constant outside a compact set, the degree formula is
    \begin{equation}
        \label{formulaDegree}
        \deg f
        =\frac1{4\pi}\int_{\mb R^2}
        f\cdot(\partial_{\xi_1}f\times\partial_{\xi_2}f)\d\xi
        =\frac1{4\pi}\int_0^{2\pi}\int_0^\infty
        f\cdot(f_r\times f_\theta)\d r\d\theta,
    \end{equation}
    where $\xi=(r\cos\theta,r\sin\theta)$.
    \begin{lemma}
        \label{lem:OddDegree}
        Suppose $w:\mb R^2 \to \mb S^2$ is continuous and equals $e_3$ outside a compact set. Moreover, suppose $w$ satisfies
        \begin{equation}
            \label{eq:Symmetryw}
            w(-\xi)=Rw(\xi),\quad \forall \xi \in \mb R^2.
        \end{equation}
        If $\deg w$ is odd, then $w(0)=-e_3$.
    \end{lemma}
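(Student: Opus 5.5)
The plan is to first pin down $w(0)$ by symmetry and then compute $\deg w \bmod 2$ by counting preimages of $-e_3$, which come in antipodal pairs. Setting $\xi=0$ in \eqref{eq:Symmetryw} gives $w(0)=Rw(0)$. The only fixed points of $R$ on $\mb S^2$ are $\pm e_3$, so $w(0)\in\{e_3,-e_3\}$. I will therefore assume $w(0)=e_3$ and show that $\deg w$ is even.

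\textbf{Step 1: equivariant smoothing.} Mollify $w-e_3$ with a radially symmetric mollifier $\rho_\eps$. Since $R$ is linear and $\rho_\eps(-\xi)=\rho_\eps(\xi)$, the map $\tilde w=e_3+\rho_\eps*(w-e_3)$ still satisfies $\tilde w(-\xi)=R\tilde w(\xi)$ and equals $e_3$ outside a compact set. By uniform continuity, $\tilde w$ is uniformly close to $w$, so $|\tilde w|\geq 1/2$ and $f=\tilde w/|\tilde w|$ is smooth and equivariant. Moreover $f$ is homotopic to $w$ through maps equal to $e_3$ near infinity, so $\deg f=\deg w$. Again $f(0)\in\{\pm e_3\}$, and since $f(0)$ is close to $w(0)=e_3$, we get $f(0)=e_3$. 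Hence there is a neighborhood $U$ of $-e_3$ and $r>0$ such that $K:=f^{-1}(\overline U)$ is compact, symmetric, and contained in $\{|\xi|\geq r\}$.

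\textbf{Step 2: equivariant transversality (the main obstacle).} I need $-e_3$ to be a regular value while keeping equivariance, and a generic perturbation would destroy the symmetry. I would use a parametric Sard argument with equivariant perturbations. For $a,b\in\mb R^2\times\{0\}$ (horizontal vectors, so $Ra=-a$ and $Rb=-b$) and a cutoff $\chi$ that is even, equals $1$ near $K$ and vanishes near $0$ and outside a compact set, set
\[
F_{a,b}(\xi)=\frac{f(\xi)+\chi(\xi)(\xi_1a+\xi_2b)}{|f(\xi)+\chi(\xi)(\xi_1a+\xi_2b)|}.
\]
The added term $\chi(\xi)(\xi_1a+\xi_2b)$ is odd in $\xi$ and has values in the horizontal plane. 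Applying $R$ to it flips its sign, and so does replacing $\xi$ by $-\xi$, which shows $F_{a,b}(-\xi)=RF_{a,b}(\xi)$. Near $K$, where $\xi\neq0$, the map $(a,b)\mapsto F_{a,b}(\xi)$ is a submersion onto a neighborhood of $-e_3$, since the horizontal directions span $T_{-e_3}\mb S^2$. Therefore the parametric transversality theorem gives small $(a,b)$ for which $-e_3$ is a regular value of $F:=F_{a,b}$. For $(a,b)$ small, $F$ is homotopic to $f$ (so $\deg F=\deg w$), $F(0)=e_3$, and $F^{-1}(-e_3)\subset K$.

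\textbf{Step 3: parity count.} The set $F^{-1}(-e_3)$ is finite and does not contain $0$. It is invariant under $\xi\mapsto-\xi$, because $R(-e_3)=-e_3$, so it splits into pairs $\{\xi,-\xi\}$ with $\xi\neq-\xi$. Differentiating the symmetry gives $dF(-\xi)\circ(-\Id)=R\circ dF(\xi)$. Both $-\Id$ on $\mb R^2$ and $R$ are orientation preserving, so the local degrees at $\xi$ and $-\xi$ coincide. Summing the local degrees gives $\deg w=\deg F=2\sum_{\text{pairs}}\pm1$, which is even. This contradicts the assumption that $\deg w$ is odd, so $w(0)=-e_3$.
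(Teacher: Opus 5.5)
Your proof is correct, but it takes a different route from the paper. The paper avoids transversality entirely. It first equivariantly homotopes $w$ to be identically $e_3$ near the origin and smooths it. It then unwinds the twisted symmetry by setting $u(r,\theta)=R_{-\theta}w(r,\theta)$, which is $\pi$-periodic in $\theta$ and so factors through the degree-two map $q(r,\theta)=(r,2\theta)$; hence $\deg u$ is even. The integral degree formula gives $u\cdot(u_r\times u_\theta)=w\cdot(w_r\times w_\theta)+\partial_r w_3$, and the extra term integrates to zero because $w=e_3$ near $0$ and near infinity, so $\deg w=\deg u$.

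You instead run the standard equivariant orbit-counting argument. The odd horizontal perturbation $\chi(\xi)(\xi_1a+\xi_2b)$ preserves the symmetry. It still has a surjective parameter differential at preimages of $-e_3$, because $\xi\neq0$ there and $-e_3$ is not horizontal. Parametric Sard therefore makes the $R$-fixed point $-e_3$ a regular value. The preimages then pair off as $\{\xi,-\xi\}$ with equal local degree, since $-\Id$ on $\R^2$ and $R$ on $T_{-e_3}\mb S^2$ both have determinant $+1$.

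Your version is more conceptual and more general. It only uses that the involution is free away from the origin, that it is orientation preserving on both sides, and that $0$ is not a preimage of the fixed point. It needs neither the preliminary homotopy to $e_3$ near the origin nor any computation with the degree integral. Its cost is the equivariant transversality step, which is the only delicate point, and you handle it correctly.

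The paper's route is more hands-on and uses only the integral formula and multiplicativity of degree. However, it relies on $R=R_\pi$ being embedded in the circle of rotations $R_\alpha$ about $e_3$, which is what makes the unwinding possible.

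A small bookkeeping point: for $K$ to be symmetric, take $U$ to be $R$-invariant (for instance a geodesic ball about $-e_3$). Alternatively, simply choose the even cutoff $\chi$ equal to one near $K\cup(-K)$.
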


    \begin{proof}
    The symmetry assumption \eqref{eq:Symmetryw} gives $w(0)=Rw(0)$, so $w(0)\in\{e_3,-e_3\}$.
    We prove the contrapositive: Assume $w(0)=e_3$ and show that
    $\deg w$ is even.\\
    First, we reduce to the case in which $w$ is smooth and identically
    $e_3$ near the origin. By continuity, there is $r_0>0$ such that
    $w(\xi)\cdot e_3>0$ whenever $|\xi|<r_0$. Choose a smooth radial
    cutoff $\chi:\R^2\to[0,1]$, supported in $\{|\xi|<r_0\}$ and equal
    to one near zero. For $t\in[0,1]$, define the homotopy
    \[
    H_t(\xi)=
    \frac{(1-t\chi(\xi))w(\xi)+t\chi(\xi)e_3}
    {\bigl|(1-t\chi(\xi))w(\xi)+t\chi(\xi)e_3\bigr|}.
    \]
    This is well-defined: wherever $\chi(\xi)\ne0$, the
    numerator has positive third component, and wherever $\chi(\xi)=0$,
    it equals $w(\xi)$. Since $\chi$ is radial and $Re_3=e_3$, we have
    $H_t(-\xi)=RH_t(\xi)$. Moreover, $H_t=w$ outside the support of $\chi$,
    so the homotopy extends continuously across infinity. Thus $H_1$ has
    the same degree as $w$, retains the symmetry and the constant value
    outside a compact set, and equals $e_3$ near zero. Hence by replacing $w$ with $H_1$ (and calling it $w$ again), we can suppose that $w$ equals $e_3$ in a neighborhood of the origin.\\
    Next we want to show that we can smooth while preserving the degree and symmetry. For $\eps>0$, let $\rho_\eps$ be a nonnegative smooth radial mollifier of integral
    one, supported in $\{|\xi|<\eps\}$. By uniform continuity of $w$, we have $\rho_\eps * w\to w$ uniformly as
    $\eps\downarrow0$. In particular, for $\eps>0$ small this means that
    \[
    w_\eps:=\frac{\rho_\eps * w}{|\rho_\eps * w|}
    \]
    is a smooth sphere-valued map. Radiality of the mollifier implies $w_\eps(-\xi)=Rw_\eps(\xi)$. Taking $\eps$ small enough ensures that $w_\eps=e_3$ near zero; compact support of the
    mollifier ensures that $w_\eps=e_3$ outside a sufficiently large disk.
    As before it is easy to see that $w$ and $w_\eps$ are homotopic for $\eps$ sufficiently small. Consequently $\deg w_\eps=\deg w$. We now fix such an $\eps$
    and again denote $w_\eps$ by $w$.\\
    In polar coordinates
    $\xi=(r\cos\theta,r\sin\theta)$, define
    \[
    u(r,\theta)=R_{-\theta}w(r,\theta),\qquad r>0.
    \]
    Because $w=e_3$ near zero and infinity, the same holds for $u$ and, in particular, both maps extend smoothly to $\mb R^2 \cup \{\infty\}$.
    Assumption \eqref{eq:Symmetryw} gives
    \[
    u(r,\theta+\pi)
    =R_{-\theta-\pi}Rw(r,\theta)
    =R_{-\theta}w(r,\theta)=u(r,\theta).
    \]
    Thus $u=\bar u\circ q$ for a continuous map $\bar u:\mb S^2\to \mb S^2$,
    where $q:\mb R^2 \cup \{\infty\}\to \mb R^2 \cup \{\infty\}$ is continuous with
    \[
    q(r,\theta)=(r,2\theta),\quad q(0)=0,\quad q(\infty)=\infty.
    \]
    In particular, $\deg q=2$,
    and multiplicativity of degree yields that $\deg u$ is even.\\
    It remains to compare $\deg u$ with $\deg w$ using \eqref{formulaDegree}.
    Since $\frac{d}{d\alpha}(R_\alpha a)
    =R_\alpha(e_3\times a)$ for a fixed vector $a$, differentiation gives
    \[
    u_r=R_{-\theta}w_r,\qquad
    u_\theta=R_{-\theta}(w_\theta-e_3\times w).
    \]
    Rotations preserve scalar triple products. Also, $|w|=1$ implies
    $w_r\cdot w=0$, so the vector triple-product identity gives
    \[
    w_r\times(e_3\times w)
    =e_3(w_r\cdot w)-w(w_r\cdot e_3)
    =-(\partial_r w_3)w.
    \]
    It follows that
    \[
    u\cdot(u_r\times u_\theta)
    =w\cdot\bigl(w_r\times(w_\theta-e_3\times w)\bigr)
    =w\cdot(w_r\times w_\theta)+\partial_r w_3.
    \]
    Applying the degree formula, we obtain
    \[
    \deg u-\deg w
    =\frac1{4\pi}\int_0^{2\pi}\int_0^\infty
    \partial_r w_3d rd\theta
    =\frac1{4\pi}\int_0^{2\pi}
    \bigl(w_3(\infty,\theta)-w_3(0,\theta)\bigr)d\theta=0,
    \]
    because $w=e_3$ near both zero and infinity. Therefore, $\deg w=\deg u$ is even, which completes the proof.
    \end{proof}

    \begin{lemma}
    \label{lem:disk-energy}
    For $\rho>0$, denote $B_\rho^2:=\{\xi\in\R^2:|\xi|<\rho\}$. Let
    $w\in H^1(B_\rho^2;\mb S^2)$ be continuous with $w(0)=-e_3$ and satisfying $w(-\xi)=Rw(\xi)$ for each $\xi \in B^2_\rho$.\\
    Then
    \begin{equation}\label{eq:disk-energy}
    E_{B_\rho^2}(w)
    \ge8\pi-\frac2{\rho^2}\int_{B_\rho^2}|w-e_3|^2 d\xi.
    \end{equation}
    \end{lemma}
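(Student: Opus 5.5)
The plan is to combine the area–energy inequality $|\nabla w|^2\ge 2|w\cdot(w_{\xi_1}\times w_{\xi_2})|$ with a boundary computation of the covered area, in the spirit of the proof of Lemma~\ref{lem:OddDegree}. By density, and since the symmetry is preserved by radial mollification followed by normalization (as in that proof), I would first reduce to $w$ smooth on $\overline{B^2_\rho}$. For a smooth $w$, a.e.\ $r\in(0,\rho)$ is a regular radius. For such $r$ I would express $\int_{B^2_r} w\cdot(w_r\times w_\theta)\,dr\,d\theta$ as a "winding" contribution plus a boundary term on $\partial B^2_r$. The boundary term should be controlled by $\int_0^{2\pi}|w-e_3|\,|w_\theta|\,d\theta$.

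\textbf{Step 1: area identity.} Write the area form of $\mb S^2$ as $d\big((1-w_3)\,d\varphi\big)$ away from the poles, where $\varphi$ is the longitude; up to orientation sign, the primitive $(1-w_3)\,d\varphi$ is bounded near $e_3$ and degenerates only at $-e_3$. Stokes' theorem on $B^2_r$ minus small disks around the preimages of $-e_3$ then gives the identity
\[
\int_{B^2_r} w\cdot(w_r\times w_\theta)\,dr\,d\theta = 4\pi m(r) + \int_0^{2\pi}(1-w_3)\,\partial_\theta\varphi\,d\theta\Big|_{\partial B_r^2},
\]
where $m(r)$ is the signed count of preimages of $-e_3$ in $B^2_r$.

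\textbf{Step 2: $m(r)$ is odd.} This is where $w(0)=-e_3$ and the symmetry $w(-\xi)=Rw(\xi)$ enter. Preimages of $-e_3$ other than $0$ come in pairs $\pm\xi$, and since $R$ preserves orientation both points of a pair carry the same local index. The origin itself has odd local index: near $0$ the horizontal part $w_h$ is odd in $\xi$, so its winding around a small circle is odd. This is the same computation as $u=R_{-\theta}w$ descending through the double cover $q$ in Lemma~\ref{lem:OddDegree}. Hence $m(r)$ is odd, so $|m(r)|\ge1$. Justifying this index count rigorously, including the required genericity and the possibility that the origin is a degenerate preimage, is the step I expect to be the main obstacle. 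If the direct count is too delicate, I would instead glue $w|_{B^2_r}$ to an explicit symmetric map on $\R^2\setminus B^2_r$ that moves $w|_{\partial B^2_r}$ to $e_3$, and apply Lemma~\ref{lem:OddDegree} to the glued map.

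\textbf{Step 3: boundary estimate.} Using $|w-e_3|^2=2(1-w_3)$ and $|w_h|^2=(1-w_3)(1+w_3)$, I would bound the boundary integrand by
\[
(1-w_3)|\partial_\theta\varphi| = \frac{(1-w_3)\,|\partial_\theta w_h|}{|w_h|}\lesssim |w-e_3|\,|w_\theta|.
\]
This works where $w_3\ge0$. On $\{w_3<0\}$ I would use the opposite primitive $-(1+w_3)\,d\varphi$, adjusting Step 1 by $4\pi$ times an integer; since the choice only matters modulo $4\pi$, this is harmless. The conclusion is
\[
E_{B^2_r}(w)\ \ge\ 8\pi - C\int_0^{2\pi}|w-e_3|\,|w_\theta|\,d\theta.
\]

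\textbf{Step 4: averaging in $r$.} I would integrate the inequality of Step 3 against the weight $2r/\rho^2\,dr$ on $(0,\rho)$, which has total mass one, and use $E_{B^2_\rho}(w)\ge E_{B^2_r}(w)$. Young's inequality gives
\[
|w-e_3|\,|w_\theta|\le \frac{\eps|w_\theta|^2}{r}+\frac{r|w-e_3|^2}{4\eps}.
\]
The term $|w_\theta|^2/r$, integrated with the weight $r\,dr$, is at most $E_{B^2_\rho}$ times $2/\rho^2$ up to constants, so it is absorbed into the left-hand side. The remaining term becomes a multiple of $\rho^{-2}\int_{B^2_\rho}|w-e_3|^2\,d\xi$. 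The precise constant $2$ in \eqref{eq:disk-energy} will depend on tuning $\eps$ and the constant $C$, and may require the sharper pointwise bound from Step 3 rather than the crude Young's inequality.
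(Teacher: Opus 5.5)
\textbf{Step 3 fails, and the failure is not only technical: the inequality it targets is false.} You aim for $E_{B_r^2}(w)\ge 8\pi-C\int_0^{2\pi}|w-e_3|\,|w_\theta|\,d\theta$. Take $0<\delta<1$ and
\[
w(\xi)=\Bigl(\frac{\delta\,\xi}{r},\,-\sqrt{1-\delta^2|\xi|^2/r^2}\Bigr),\qquad \xi\in B_r^2 .
\]
This map is smooth, satisfies $w(-\xi)=Rw(\xi)$ and $w(0)=-e_3$, and has $E_{B_r^2}(w)=2\pi\delta^2+O(\delta^4)$. On $\partial B_r^2$ you have $|w_\theta|=\delta$ and $|w-e_3|\le 2$, so your right-hand side is at least $8\pi-4\pi C\delta$. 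For small $\delta$ the inequality fails.

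The reason is that the odd integer $m(r)$ from Step 2 belongs to the primitive $(1-w_3)\,d\varphi$, which is singular at $-e_3$. When $w(\partial B_r^2)$ stays near $-e_3$, the boundary term of this primitive is close to $\pm4\pi$ and cancels $4\pi m(r)$ completely; in the example $\bigl|\int_{B_r^2} w\cdot(w_{\xi_1}\times w_{\xi_2})\,d\xi\bigr|\le E_{B_r^2}(w)/2=O(\delta^2)$.

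Switching to $-(1+w_3)\,d\varphi$ is not harmless. If you switch only on sub-arcs, the boundary integral changes by twice the increment of $\varphi$ along those arcs, which is not in $4\pi\mathbb{Z}$. If you switch globally, the integer changes by the winding number of $(w_1,w_2)|_{\partial B_r^2}$. That winding number is odd precisely because of the symmetry, so the new integer is even and the $4\pi$ lower bound is lost.

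Independently, Step 4 cannot produce \eqref{eq:disk-energy}. Absorbing the $|w_\theta|^2/r$ term into the left-hand side turns $8\pi$ into $8\pi/(1+\lambda)$ for some $\lambda>0$. Without absorption, a deficit of the form $\rho^{-1}\|w-e_3\|_{L^2}E^{1/2}$ is linear rather than quadratic in $\|w-e_3\|_{L^2}/\rho$.

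\textbf{The missing idea is to avoid the Jacobian entirely and use the symmetry through Wirtinger's inequality on each circle.} Since $w_i(r,\theta+\pi)=-w_i(r,\theta)$ for $i=1,2$, these components have zero angular mean, so $\int_0^{2\pi}|w_\theta|^2d\theta\ge\int_0^{2\pi}(1-w_3^2)\,d\theta$. Next, the identities $w_r\cdot(e_3-w_3w)=\partial_r w_3$ and $|e_3-w_3w|^2=1-w_3^2$, together with completing the square, give $r|w_r|^2+(1-w_3^2)/r\ge 2\partial_r w_3$. Integrating in $r$ from the origin, where $w_3=-1$, yields for a.e.\ $s<\rho$
\[
E_{B_s^2}(w)\ge 2\int_0^{2\pi}\bigl(1+w_3(s,\theta)\bigr)\,d\theta=8\pi-\int_0^{2\pi}|w(s,\theta)-e_3|^2\,d\theta .
\]
This is a circle-wise deficit with no gradient in it; in the example above it is nearly $8\pi$, as it must be. Multiplying by $s$ and integrating over $(0,\rho)$ gives \eqref{eq:disk-energy} with exactly the constant $2$. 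No degree count, genericity or smoothing is needed, only continuity at the origin and $w(0)=-e_3$.
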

    \begin{proof}
    We work in polar coordinates $\xi =(r\cos \theta,r\sin\theta)$. By the symmetry assumption on $w$, we have for $i=1,2$ that
    \[
    w_i(r,\theta+\pi)=-w_i(r,\theta).
    \]
    In particular, this implies
    \[
    \frac{1}{2\pi}\int_{0}^{2\pi}w_i(r,\theta)d\theta =0,\quad \text{for }i=1,2.
    \]
    Hence, Poincar\'e inequality gives, for almost every $r$,
    \[
    \int_0^{2\pi}|w_\theta|^2 d\theta
    \ge\int_0^{2\pi}(w_1^2+w_2^2) d\theta
    =\int_0^{2\pi}(1-w_3^2) d\theta.
    \]
    Since $|w|=1$, we have
    $w_r\cdot(e_3-w_3w)=\partial_r w_3$ and $|e_3-w_3w|^2=1-w_3^2$.
    Completing the square gives
    \[
    r|w_r|^2+\frac{1-w_3^2}{r}\ge2\partial_r w_3.
    \]
    Combining the above inequalities and integrating over $\eps<r<s<\rho$ yields
    \[
    E_{B_s^2\setminus B_\eps^2}(w)=\int_{\eps}^s\int_{0}^{2\pi}\left( r|w_r|^2+\frac{|w_\theta|^2}{r}\right)d\theta dr
    \ge2\int_0^{2\pi}
    \bigl(w_3(s,\theta)-w_3(\eps,\theta)\bigr)d\theta.
    \]
    Sobolev slicing justifies this calculation for almost every pair of
    radii away from zero. Continuity at $0$ and $w_3(0)=-1$ allow
    $\eps\downarrow0$, giving, for almost every $s<\rho$,
    \[
    E_{B_s^2}(w)
    \ge2\int_0^{2\pi}(1+w_3(s,\theta)) d\theta
    =8\pi-\int_0^{2\pi}|w(s,\theta)-e_3|^2 d\theta,
    \]
    where we used $|w-e_3|^2=2-2w_3$. Since $E_{B_\rho^2}(w)\geq E_{B_s^2}(w)$ this means
    \[
    E_{B_\rho^2}(w) \geq 8\pi-\int_0^{2\pi}|w(s,\theta)-e_3|^2 d\theta.
    \]
    Multiply the above by $s$ and integrate from $0$ to $\rho$. The left-hand side
    is independent of $s$, so
    \[
    \frac{\rho^2}{2}E_{B_\rho^2}(w)
    \ge 4\pi\rho^2
    -\int_0^\rho\int_0^{2\pi}
    |w(s,\theta)-e_3|^2\,s\,d\theta\,ds
    =4\pi\rho^2-\int_{B_\rho^2}|w(\xi)-e_3|^2\,d\xi,
    \]
    which after rearranging is the desired estimate \eqref{eq:disk-energy}.
    \end{proof}

    \begin{proposition}
        \label{prop:Constant-expensive}
        Suppose $(v_n)_{n\in \mb N} \subset \mc C$ with $v_n \to e_3$ in $L^2(B_2)$. Then
        \[
        \liminf_{n \to \infty} E_{B_2}(v_n) \geq 32\pi.
        \]
    \end{proposition}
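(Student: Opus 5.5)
The strategy is to slice $B_2$ by planes perpendicular to the $z$-axis and apply Lemma \ref{lem:disk-energy} on each slice. On a horizontal plane $\{z=t\}$ the symmetry $g$ acts as $\xi\mapsto-\xi$, so the restriction $w_t(\xi):=v(\xi,t)$ satisfies $w_t(-\xi)=Rw_t(\xi)$. Since $v-e_3\in W^{1,p}$ with $p>2$, almost every slice is in $W^{1,p}(\mb R^2)$. Such a slice is therefore continuous and equal to $e_3$ outside $B_2$, and Lemma \ref{lem:OddDegree} applies to it.

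The first task is to compute $\deg w_t$. By the divergence formula, $\deg w_t=\frac{1}{4\pi}\int_{\{z=t\}}D(v)\cdot e_3$ for a.e.\ $t$. Integrating $\di D(v)$ over the slab $\{z>t\}$, where $v=e_3$ far away, gives
\[
-4\pi\deg w_t = 4\pi\,(\text{charge above height }t).
\]
The charge above height $t$ is $-1$ for $t\in(0,1)$ (only $\delta_{e_3}$ lies above) and $+1$ for $t\in(-1,0)$ (only $-\delta_{-e_3}$ lies above, since $\pm e_1$ sit at $z=0$). Hence $\deg w_t=\pm1$, which is odd, for a.e.\ $|t|<1$. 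I will justify this rigorously by testing $\di D(v)$ against $\psi(z)$ with $\psi$ approximating an indicator, and using $D(v)\in L^1$ together with Fubini. Lemma \ref{lem:OddDegree} then gives $w_t(0)=-e_3$ for a.e.\ $|t|<1$.

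Slicing only along $z$ yields at most about $8\pi\cdot 2=16\pi$. To reach $32\pi$ I will also slice with planes perpendicular to the $x$-axis, but these are not symmetric under $g$. The better choice is to slice by planes containing the $z$-axis... Rather than pursue that, I note that the symmetry $g$ preserves every plane $\{z=t\}$ and also acts on planes through the $z$-axis, but only the horizontal planes inherit the point symmetry. So I will instead slice horizontal planes \emph{around two centers}. On the slice $z=t$, $|t|<1$, the degree count around the positive charges $\pm e_1$ should give additional forced points. Concretely, I will use cylinders: for each $t\in(-1,1)$, apply Lemma \ref{lem:disk-energy} on the disk $B^2_{\rho(t)}$ centered at the origin in $\{z=t\}$, with $\rho(t)=\sqrt{4-t^2}$ the largest admissible radius. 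This gives
\[
E_{B_2}(v_n)\ge\int_{-1}^{1}\Big(8\pi-\tfrac{2}{\rho(t)^2}\int|v_n-e_3|^2\Big)dt\to 16\pi.
\]
Since this is only $16\pi$, the energy must be doubled by another family. Here I will use vertical planes $\{x=s\}$ with $|s|<1$, which separate $e_1$ from the remaining charges. On these planes the slice has odd degree, but $g$ maps $\{x=s\}$ to $\{x=-s\}$, so Lemma \ref{lem:OddDegree} does not apply to a single slice. The fix is to use instead the reflected pair: the planes $\{y=s\}$ are likewise exchanged. I therefore expect the genuinely invariant second family to consist of horizontal planes with centers displaced, or of a different foliation.

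The main obstacle is obtaining the second $16\pi$ from a $g$-invariant foliation whose leaves carry odd degree. What I would try first is a foliation by surfaces invariant under $g$ that separate $\{e_1,-e_1\}$ from $\{e_3,-e_3\}$ differently, for example the cylinders $\{x^2+y^2=s^2\}$ for $0<s<1$. These are $g$-invariant; capped appropriately, they enclose $\pm e_3$-segments but not $\pm e_1$, giving net degree $\pm2$. Since that degree is even, this may fail. The fallback is to bound the energy below in $\{|z|<1\}$ and the horizontal contribution separately, and to use $|\nabla v|^2\ge|\nabla_{xy}v|^2+|\p_z v|^2$ to add a $\p_z$-based estimate from vertical lines through the axis. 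Along these lines $v$ goes from $e_3$ at $z=\pm2$ to $-e_3$ at $z=0$-adjacent slices, and I would integrate this over a shrinking neighborhood using $v_n\to e_3$.
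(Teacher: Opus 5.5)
Your treatment of the horizontal slices is essentially right. For a.e.\ $|t|<1$ the slice $w_t=v(\cdot,\cdot,t)$ has degree $\pm1$, Lemma~\ref{lem:OddDegree} gives $w_t(0)=-e_3$, and Lemma~\ref{lem:disk-energy} yields $16\pi$ in the limit. (Small slip: for $t\in(-1,0)$ the charges above height $t$ are $\pm e_1$ and $e_3$, total $+1$, while $-e_3$ lies below; the value you get is still correct.)

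However, the proposal never produces the second $16\pi$. As it stands it only proves $\liminf_n E_{B_2}(v_n)\ge16\pi$, which is weaker even than the Brezis--Coron--Lieb bound $16\sqrt2\pi$. The alternatives you float do not close the gap. The capped cylinders $\{x^2+y^2=s^2\}$ carry even degree. A $\p_z$-estimate along vertical lines also fails: only the axis is known to carry the value $-e_3$, so any neighbourhood you integrate over must shrink and its contribution vanishes.

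The wrong turn is insisting that the second family be $g$-invariant with odd degree. The planes $\{x=s\}$ need neither the symmetry nor Lemma~\ref{lem:OddDegree}. For $0<|s|<1$ such a plane separates one of $\pm e_1$ from the other three charges. Testing $\di D(v)$ exactly as you do for the horizontal flux therefore gives $\bigl|\int_{\{x=s\}}D_1(v)\,dy\,dz\bigr|=4\pi$. The pointwise bound $2|D_1(v)|\le|\p_yv|^2+|\p_zv|^2$ then turns this into $\int_{\{|x|>\rho\}}(|\p_yv|^2+|\p_zv|^2)\ge16\pi(1-\rho)$.

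The second ingredient is to avoid double-counting $|\p_yv|^2$. You must \emph{not} use the maximal radius $\sqrt{4-t^2}$ in Lemma~\ref{lem:disk-energy}. Use instead a fixed small radius $\rho$, so that the horizontal estimate lives in the thin cylinder $\{x^2+y^2<\rho^2,\,|z|<1\}\subset\{|x|<\rho\}$. Its error $\frac2{\rho^2}\|v_n-e_3\|^2_{L^2(B_2)}$ still tends to $0$ for fixed $\rho$. This is precisely where the $L^2$ convergence enters: it forces the horizontal-slice energy to concentrate on the axis.

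Adding the two disjoint contributions gives $E_{B_2}(v_n)\ge32\pi-16\pi\rho-\frac2{\rho^2}\|v_n-e_3\|^2_{L^2(B_2)}$. Let $n\to\infty$ and then $\rho\downarrow0$.
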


    \begin{proof}
        Let $v \in \mc C$. Recall the definition of the vector field $D(v)$ in \eqref{eq:DefJacVec}. We define the functions
        \[
        F_z(t)=\int_{\mb R^2}D_3(v)(x,y,t)dxdy,\quad F_x(t)=\int_{\mb R^2} D_1(v)(t,y,z)dydz.
        \]
        We want to compute the values of these functions precisely. To do so, recall that since $v \in \mc C$, we have
        \begin{equation}
            \label{eq:DivEq}
            \di D(v)=4\pi (\delta_{e_1}+\delta_{-e_1}-\delta_{e_3}-\delta_{-e_3}),\quad \text{in }\mc D'(\mb R^3).
        \end{equation}
        Now pick $\eta \in C_c^\infty(\mb R)$ and let $\chi \in C_c^\infty(\mb R^2)$ such that $\chi\equiv 1$ on $\{(x,y):\sqrt{x^2+y^2}\leq 2\}$. Hence, 
        \[
        \varphi(x,y,z)=\chi(x,y)\eta(z)
        \]
        is an admissible test function for \eqref{eq:DivEq}. Since $D(v)=0$ outside $B_2$ as $v$ is constant there, we compute
        \begin{align*}
            \langle \di D(v),\varphi \rangle&=-\int_{\mb R}\left(\int_{\mb R^2} D_3(v)(x,y,z)dxdy\right)\eta'(z)dz \\
            &=-\int_{\mb R}F_z(z)\eta'(z)dz\\
            &=\langle F_z',\eta \rangle.
        \end{align*}
        From \eqref{eq:DivEq} we have hence
        \[
        \langle F_z',\eta\rangle=4\pi (\varphi(e_1)+\varphi(-e_1)-\varphi(e_3)-\varphi(-e_3))=4\pi(2\eta(0)-\eta(1)-\eta(-1)),
        \]
        in other words
        \[
        F_z'=4\pi(2\delta_0-\delta_{-1}-\delta_1).
        \]
        Since $F_z$ has compact support, it follows that
        \begin{equation}
            \label{eq:FormFz}
            F_z(t)=
        \begin{cases}
        -4\pi,&-1<t<0,\\
        4\pi,&0<t<1,\\
        0,&|t|>1,
        \end{cases}
        \quad\text{for a.e. }t.
        \end{equation}
        A similar computation shows that
        \begin{equation}
            \label{eq:FormFx}
            F_x(t)=
        \begin{cases}
        4\pi,&-1<t<0,\\
        -4\pi,&0<t<1,\\
        0,&|t|>1,
        \end{cases}
        \quad\text{for a.e. }t.
        \end{equation}
        Next, note that for a.e. $t$ the slice $v(\cdot,\cdot,t)-e_3$ belongs to $W^{1,p}(\mb R^2)$. In particular, since $p>2$ we can suppose that $v(\cdot,\cdot,t)$ is continuous. The definition of the admissible class gives further that $v(\cdot,\cdot,t)$ satisfies the symmetry condition of Lemma \ref{lem:OddDegree} and is equal to $e_3$ outside a compact set. Finally, using the degree formula \eqref{formulaDegree} together with the smoothing argument already discussed in the proof of Lemma \ref{lem:OddDegree} we have by \eqref{eq:FormFz}
        \begin{equation}
            \label{eq:degree}
            |\deg(v(\cdot,\cdot,t))|=\frac{|F_z(t)|}{4\pi}=1,\quad \text{for a.e. }-1<t<1.
        \end{equation}
        Lemma \ref{lem:OddDegree} gives that
        \begin{equation*}
            v(0,0,t)=-e_3,\quad \text{for a.e. }-1<t<1.
        \end{equation*}
        Fix $0<\rho<1$. Apply Lemma~\ref{lem:disk-energy} to $v(\cdot,\cdot,t)$ and integrate over $-1<t<1$. This gives
        \begin{equation}\label{eq:cylinder-energy}
        \int_{\{x^2+y^2<\rho^2,|z|<1\}}(|\partial_xv|^2+|\partial_yv|^2)d x d y d z
        \geq 16\pi-\frac2{\rho^2}||v-e_3||_{L^2(B_2)}^2.
        \end{equation}
        On the other hand, $2|D_1(v)|\le|\partial_yv|^2+|\partial_zv|^2$
        and \eqref{eq:FormFx} give
        \begin{equation}\label{eq:side-energy}
        \int_{\{|x|>\rho\}}
        (|\partial_yv|^2+|\partial_zv|^2)d x d y d z
        \ge2\int_{|s|>\rho}|F_x(s)|d s=16\pi(1-\rho).
        \end{equation}
        Since the two regions of integration are disjoint, adding
        \eqref{eq:cylinder-energy} and \eqref{eq:side-energy} yields
        \begin{equation}\label{eq:quantitative-threshold}
        E_{B_2}(v)\ge32\pi-16\pi\rho
        -\frac2{\rho^2}||v-e_3||_{L^2(B_2)}^2.
        \end{equation}
        Apply \eqref{eq:quantitative-threshold} to $v_n$, first let $n\to\infty$, and then let
        $\rho\downarrow0$. This completes the proof.
    \end{proof}

    \section{Competitor}

    \label{sec:Competitor}

    \begin{proposition}
        \label{prop:trial}
        The class $\mc C$ is nonempty. More precisely, define $Q:\mb S^2\to \mb S^2$ by
        \[
        Q(\xi)=
        \begin{cases}
        (-2\xi_1\xi_3,-2\xi_2\xi_3,1-2\xi_3^2),&\xi_3<0,\\
        e_3,&\xi_3\ge0.
        \end{cases}
        \]
        Let $\eps>0$ and denote $r=\sqrt{x^2+y^2+z^2}$. Define
        \begin{equation}
        \label{eq:trial}
        \begin{gathered}
        F_\eps(x,y,z)=(xz,-r y,\eps(r^2-1)),\quad
        v_\eps=Q\left(\frac{F_\eps}{|F_\eps|}\right)
        \quad\text{on }\R^3\setminus\{\pm e_1,\pm e_3\}.
        \end{gathered}
        \end{equation}
        Then $v_\eps\in\mc C$. Moreover, $v_\eps=e_3$ almost everywhere
        outside $B_1$.
    \end{proposition}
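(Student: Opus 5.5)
We verify the defining conditions of $\mc C$ for $v_\eps$ one at a time: support, symmetry, regularity, and the charge equation for $\di D(v_\eps)$. I expect the charge equation to be the main work.

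\emph{Support and symmetry.} For $r>1$ the third component $\eps(r^2-1)$ of $F_\eps$ is positive. Hence $F_\eps/|F_\eps|$ lies in the open upper hemisphere there, and $Q$ sends it to $e_3$. This gives $v_\eps=e_3$ outside $\overline{B_1}$, and in particular $v_\eps=e_3$ outside $B_2$. For the symmetry, note that $r$ is $g$-invariant. Then
\[
F_\eps\circ g=(-xz,\,ry,\,\eps(r^2-1))=R F_\eps .
\]
A direct check from the formula shows $Q(R\xi)=RQ(\xi)$, since $Q$ is odd-odd-even in its first two components. Together these give $v_\eps\circ g=R\circ v_\eps$.

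\emph{Regularity.} The map $Q$ is Lipschitz on $\mb S^2$: on $\{\xi_3=0\}$ the lower formula gives $(0,0,1)$, so the two pieces agree. The zeros of $F_\eps$ satisfy $r=1$, $ry=0$ and $xz=0$, which forces them to be exactly $\pm e_1$ and $\pm e_3$. At each of these points I will compute $dF_\eps$ and check that it is invertible.
\begin{itemize}
\item At $\pm e_1$ the differential is $(\pm dz,\,-dy,\,\pm2\eps\,dx)$.
\item At $\pm e_3$ it is $(\pm dx,\,-dy,\,\pm2\eps\,dz)$.
\end{itemize}
Invertibility gives $|F_\eps(x)|\gtrsim|x-a|$ near each zero $a$. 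Hence $|\nabla (F_\eps/|F_\eps|)|\lesssim |x-a|^{-1}$ near $a$, and since $Q$ is Lipschitz the same bound holds for $|\nabla v_\eps|$. This bound lies in $L^p_{loc}$ because $p<3$. Away from the four points $v_\eps$ is locally Lipschitz, so combined with the support statement we get $v_\eps-e_3\in W^{1,p}(\mb R^3)$.

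\emph{Charges.} Away from the singular points, $v_\eps$ is Lipschitz with values in $\mb S^2$, so $\di D(v_\eps)=0$ there. I will justify this by approximating $Q$ by smooth maps, or by noting that $D(v)$ is the pullback of the area form under a Lipschitz map. Consequently $\di D(v_\eps)=\sum_a c_a\delta_a$, where $c_a$ is the flux of $D(v_\eps)$ through a small sphere $\p B_\delta(a)$. That flux equals $4\pi\deg(v_\eps|_{\p B_\delta(a)})$, exactly as in \cite{BCL}. I will establish this identity with a cutoff test function and integration by parts, which is legitimate because $D(v_\eps)\in L^1$ by \eqref{ineq:UpperBoundJac}.

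By multiplicativity of the degree, $\deg(v_\eps|_{\p B_\delta(a)})=\deg Q\cdot\operatorname{sign}\det dF_\eps(a)$. Computing the determinants gives $+2\eps$ at both $\pm e_1$ and $-2\eps$ at both $\pm e_3$. The map $Q$ collapses the closed upper hemisphere to the point $e_3$ and maps the open lower hemisphere bijectively onto $\mb S^2\setminus\{e_3\}$; it is the angle-doubling map $\phi\mapsto 2\phi$ in the polar angle measured from the south pole. So $\deg Q=\pm1$, with the sign fixed by one Jacobian check at $-e_3$, where $Q(-e_3)=-e_3$. Since the orientations are already encoded in the formula of \cite{BCL}, this should give exactly $4\pi(\delta_{e_1}+\delta_{-e_1}-\delta_{e_3}-\delta_{-e_3})$.

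The expected obstacle is this last step. One must get the sign conventions right, namely the orientation of the sphere in the flux and the sign of $\deg Q$. One must also rigorously justify $\di D(v_\eps)=0$ away from the points even though $Q$ is only Lipschitz across the equator. For the latter I would use that $D(v_\eps)\equiv0$ on the open set where $F_{\eps,3}>0$, and that on its complement $v_\eps$ is a composition of smooth maps up to the boundary. Alternatively I would mollify $Q$ equivariantly and pass to the limit in $L^1$.
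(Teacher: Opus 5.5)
Your proposal is correct and follows essentially the same route as the paper: equivariance of $F_\eps$ and $Q$, nondegenerate zeros giving $|\nabla v_\eps|\lesssim|x-a|^{-1}$ (plus a cutoff argument to extend the weak gradient across the four points), and charges from $\deg Q\cdot\operatorname{sgn}\det DF_\eps(a)$; the only difference is that the paper simply cites \cite[Appendix~B]{BCL} for the flux identity you intend to reprove. The sign you left open is settled because $-e_3$ is the unique preimage of $-e_3$ and $dQ(-e_3)=2\,\Id$ on $T_{-e_3}\mb S^2$, so $\deg Q=+1$, consistent with your orientation-preserving angle-doubling description.
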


    \begin{proof}
    First note that, since $F_\eps\circ g=RF_\eps$ and
    $Q\circ R=R\circ Q$, the map $v_\eps$ has the required symmetry.
    Next, outside $B_1$, the map $F_\eps/|F_\eps|$ takes values in the
    upper hemisphere wherever it is defined. Hence $v_\eps=e_3$ almost
    everywhere outside $B_1$.\\
    It is easy to see that $F_\eps\in C^1(\R^3;\R^3)$ and that its only
    zeros are $\{\pm e_1,\pm e_3\}$. Since $Q$ is Lipschitz, it is clear
    that $v_\eps$ is locally Lipschitz away from these four points.
    Suppose $a\in\{\pm e_1,\pm e_3\}$. An elementary computation shows
    that
    \[
    \det DF_\eps(a)=2\eps(a_1^2-a_3^2)\ne0.
    \]
    In particular, by the inverse function theorem, we have
    \[
    |F_\eps(x,y,z)|\ge c_\eps|(x,y,z)-a|
    \]
    near $a$. Again, from the Lipschitz continuity of $Q$ and the definition
    of $v_\eps$, it follows that
    \[
    |\nabla v_\eps(x,y,z)|\le C_\eps|(x,y,z)-a|^{-1}
    \]
    almost everywhere near $a$. These estimates hold near each of the four
    points, and hence $\nabla v_\eps\in L^s(\R^3)$ for every $1\le s<3$.
    Since $v_\eps$ is bounded, a standard cutoff argument shows that its
    weak derivatives extend across the set $\{\pm e_1,\pm e_3\}$. Because $v_\eps-e_3$
    has compact support, this proves that
    \[
    v_\eps-e_3\in W^{1,s}(\R^3;\R^3)
    \qquad\text{for every }1\le s<3.
    \] 
    Next we prove the formula for the distributional divergence,
    \begin{equation}\label{eq:trial-J}
    \ddiv D(v_\eps)
    =4\pi(\delta_{e_1}+\delta_{-e_1}-\delta_{e_3}-\delta_{-e_3})
    \quad\text{in }\mc D'(\R^3).
    \end{equation}
    In view of \cite[Appendix~B]{BCL},
    it is enough to show that $e_1,-e_1$ are isolated singular points of
    degree $1$, and that $e_3,-e_3$ are isolated singular points of degree
    $-1$.\\
    First note that $Q$ has degree $1$. Indeed, the unique preimage of
    $-e_3$ is $-e_3$, and $dQ(-e_3)=2I$ on the tangent plane.
    Let $a\in\{\pm e_1,\pm e_3\}$ and choose $\rho>0$ sufficiently small.
    Orient $\partial B_\rho(a)$ as the boundary of its ball. By
    multiplicativity of degree, we have
    \[
    \deg\bigl(v_\eps|_{\partial B_\rho(a)}\bigr)
    =\deg(Q)\,\operatorname{sgn}\det DF_\eps(a)
    =a_1^2-a_3^2.
    \]
    This proves that the degrees at $\pm e_1$ are $1$ and those at
    $\pm e_3$ are $-1$. Hence $v_\eps\in\mc C$, completing the proof.
    \end{proof}

    \begin{remark}
    Let us give some intuition behind the competitor. The geometric idea is to
    concentrate the energy near the cross
    \[
    [-e_1,e_1]\cup[-e_3,e_3] \subset \mb R^3.
    \]
    Recall from the proof of Proposition~\ref{prop:Constant-expensive}
    that the horizontal slices of an admissible map have odd degree for
    almost every $-1<t<1$. Lemma~\ref{lem:OddDegree} therefore gives
    $v(0,0,t)=-e_3$. For sequences converging to $e_3$,
    Lemma~\ref{lem:disk-energy} then gives a concentration cost of at least
    $16\pi$ along the vertical segment $[-e_3,e_3]$.
    To connect the remaining singularities to this segment, a natural
    symmetric choice is to join both $e_1$ and $-e_1$ to the origin. This suggests a concentration pattern with energy $32\pi$.\\
    Inside $B_1$, the stereographic coordinate of $v_\eps$ from $e_3$ is
    $(xz,-ry)/(\eps(1-r^2))$. Note that
    \[
    x^2z^2+r^2y^2=(x^2+y^2)(z^2+y^2)
    \]
    is the product of the squared distances to the two axes.
    Thus, for small $\eps$, the map is close to $e_3$ away from the cross.
    Near the origin, the four arms meet in a regular region of size
    $\sqrt\eps$. The computations below show that this is exactly the region where energy can be saved. Since $v_\eps \to e_3$ in $L^2(B_2)$ we know from
    Proposition~\ref{prop:Constant-expensive}, that 
    \[
    \liminf_{\eps \downarrow 0}E_{B_2}(v_\eps)\geq 32\pi,
    \]
    while the computations in Proposition \ref{prop:gap} show that for small $\eps>0$ we have
    \[
    E_{B_2}(v_\eps)<32\pi.
    \]
    \end{remark}

    \begin{proposition}
        \label{prop:gap}
        In the notation of Proposition~\ref{prop:trial}, we have for all
        $\eps>0$ sufficiently small that
        \begin{equation}
            \label{eq:strict-gap}
            E_{B_2}(v_\eps)<32\pi.
        \end{equation}
    \end{proposition}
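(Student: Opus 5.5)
The plan is to compute $E_{B_2}(v_\eps)$ through an asymptotic expansion as $\eps\downarrow0$: show $E_{B_2}(v_\eps)=32\pi-c_0\sqrt\eps+o(\sqrt\eps)$ with an explicit constant $c_0>0$. Since $v_\eps=e_3$ outside $B_1$, only $B_1$ matters. There I use the stereographic representation from the remark: $v_\eps$ is the inverse stereographic projection of
\[
w_\eps=\frac{(xz,-ry)}{\eps(1-r^2)}.
\]
Hence
\[
|\nabla v_\eps|^2=\frac{4|\nabla w_\eps|^2}{(1+|w_\eps|^2)^2},
\]
which is an explicit rational-type integrand. I split $B_1$ into five kinds of region. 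The first is a ball of radius $K\sqrt\eps$ around the origin. The second is four thin tubes around the arms $[-e_1,e_1]$ and $[-e_3,e_3]$, away from the origin and the endpoints. The third is small balls around the four endpoints $\pm e_1,\pm e_3$. The fourth is a neighbourhood of $\partial B_1$. The fifth is the remainder, where $|w_\eps|\gtrsim 1/\eps$ and the energy density is $O(\eps^2)$ times a locally integrable weight.

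Along the vertical arm at height $z$ with $\sqrt\eps\ll|z|<1$, the horizontal slice of $w_\eps$ is, to leading order, the linear map
\[
(x,y)\mapsto \frac{(zx,-|z|y)}{\eps(1-z^2)}.
\]
This map is conformal, so the horizontal slice energy is $8\pi$ up to errors coming from the deviation of $r$ from $|z|$. Those errors are of relative size $(x^2+y^2)/z^2$ on the bubble scale $\eps/|z|$. The horizontal arm works the same way with $(y,z)$-slices and $r\approx|x|$.

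I will write the energy as horizontal slice energy plus $\int|\partial_z v_\eps|^2$, and symmetrically for the other arm. The slice-energy defect and the longitudinal-derivative term should each integrate to $O(\eps\log(1/\eps))$ along the arms. Because the arms have total length $4$, the arms contribute $32\pi-8\pi\cdot(\text{excised length})+O(\eps\log\frac1\eps)$. Near the endpoints, $F_\eps$ is a nondegenerate linear map to first order, by the determinant computation in Proposition \ref{prop:trial}. Blowing up at scale $\eps$ there shows their contribution is $O(\eps\log\frac1\eps)$ beyond the straight-arm count. The region near $\partial B_1$ and the remainder are $O(\eps)$.

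The decisive region is the core. Set $(x,y,z)=\sqrt\eps(X,Y,Z)$. Then
\[
w_\eps=(XZ,-\tilde R Y)+O(\eps),\qquad \tilde R=|(X,Y,Z)|,
\]
and since Dirichlet energy in three dimensions scales like length, the core contributes $\sqrt\eps\int_{B_K}|\nabla V|^2$. Here $V$ is the inverse stereographic projection of the $\eps$-independent map $W=(XZ,-\tilde RY)$. The corresponding straight-arm count for the core is $8\pi\cdot 4K\sqrt\eps$. This gives
\[
c_0=\lim_{K\to\infty}\Bigl(32\pi K-\int_{B_K}|\nabla V|^2\Bigr),
\]
and the arm analysis shows that the tails beyond $|(X,Y,Z)|\sim K$ converge absolutely. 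Defect terms of the form $(X^2+Y^2)/Z^2$ on scale $1/|Z|$ decay like $|Z|^{-3}$, so the limit exists.

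The main obstacle is proving $c_0>0$. The first thing I would try is the slicing decomposition $|\nabla V|^2=(|\partial_XV|^2+|\partial_YV|^2)+|\partial_ZV|^2$. At height $Z$, the horizontal slice $(X,Y)\mapsto(XZ,-\tilde RY)$ has degree $\pm1$ for $Z\ne0$, so its energy is at least $8\pi$. That alone is the wrong direction, so the saving must come from the near-vanishing of the longitudinal derivatives in the core. Heuristically, where the two arms merge, the map covers the sphere once with bounded gradient, instead of paying $8\pi$ per unit length on four separate arms.

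Concretely, I would compute $\int_{B_K}|\nabla V|^2$ in spherical-type coordinates adapted to the factorization
\[
|W|^2=(X^2+Y^2)(Y^2+Z^2).
\]
I would then compare it directly with four half-infinite straight bubble lines. If a closed form is out of reach, a fallback is a rigorous numerical or upper-bound estimate of $c_0$ showing it is positive. Combining these pieces gives $E_{B_2}(v_\eps)<32\pi$ for all sufficiently small $\eps$.
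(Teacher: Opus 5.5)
Your expansion $E_{B_2}(v_\eps)=32\pi-c_0\sqrt\eps+o(\sqrt\eps)$ is in fact true. However, the proposal has a genuine gap at the one step that carries the proposition: you never show $c_0>0$. You note yourself that slicing gives the inequality in the wrong direction. The remark that the merged core ``covers the sphere once with bounded gradient'' is only a heuristic. ``Compute it in adapted coordinates, or else bound it numerically'' is a plan, not an argument. A certified numerical bound for a quantity defined as $\lim_{K\to\infty}$ of an improper three-dimensional integral would itself need explicit tail estimates. Without the sign of $c_0$, nothing has been proved. Separately, the arm, endpoint and boundary-layer bounds of size $O(\eps\log\frac{1}{\eps})$ are only asserted. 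So is the matching of core and arms (first $\eps\downarrow0$ at fixed $K$, then $K\to\infty$, with uniform errors). Even granting $c_0>0$, a substantial amount of analysis is therefore missing.

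The missing idea is an exact identity that makes any decomposition into regions unnecessary. In polar coordinates $x=r\omega$ on $B_1$, the map depends on $r$ only through $k(r)=\eps(1-r^2)/r^2$:
\[
v_\eps(r\omega)=\frac{(2k\omega_1\omega_3,\,-2k\omega_2,\,A-k^2)}{A+k^2},\qquad A=\omega_1^2\omega_3^2+\omega_2^2=(1-\omega_1^2)(1-\omega_3^2).
\]
Using $rk'+2k=-2\eps$, one verifies pointwise that
\[
r^2|\p_rv_\eps|^2+|\nabla_{\mb S^2}v_\eps|^2=8+2\Delta_{\mb S^2}\log(A+k^2)-8\frac{d}{dr}\Bigl(\frac{rk^2}{A+k^2}\Bigr)+16\,\frac{\eps^2A-k^2\omega_2^2}{(A+k^2)^2}.
\]
Now integrate over $\mb S^2\times(0,1)$. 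The constant gives exactly $32\pi$. Both divergence terms vanish, since $\int_{\mb S^2}rk^2/(A+k^2)\d\omega\to0$ as $r\to0$ and as $r\to1$. The $\eps^2$-term is at most $2\pi\eps\int_{\mb S^2}A^{-1/2}\d\omega=O(\eps)$. The $-k^2\omega_2^2$-term is at most $-\frac{\pi}{12}\sqrt\eps$, which one sees by keeping only $\sqrt\eps\le r\le2\sqrt\eps$, where $\frac18\le k\le1$ for $\eps\le\frac18$. That is the whole proof.

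Your ``spherical-type coordinates adapted to the factorization'' was pointing in this direction. For your blow-up $V$ one has $k=\rho^{-2}$, the $\eps^2$-term drops out, and the same identity gives
\[
c_0=16\int_0^\infty\int_{\mb S^2}\frac{k^2\omega_2^2}{(A+k^2)^2}\d\omega\d\rho,\qquad k=\rho^{-2},
\]
which is manifestly positive. So your route can be completed, but only by essentially supplying this identity. At that point the arm and endpoint analysis becomes superfluous.
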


    \begin{proof}
    As $v_\eps$ is almost everywhere constant outside $B_1$, we have
    $E_{B_2}(v_\eps)=E_{B_1}(v_\eps)$. Let
    $\omega=(\omega_1,\omega_2,\omega_3)\in\mb S^2$ and denote, for
    $0<r<1$,
    \[
    k=k(r)=\frac{\eps(1-r^2)}{r^2}.
    \]
    From the definition of $v_\eps$ in \eqref{eq:trial}, we have in polar
    coordinates
    \[
    v_\eps(r\omega)
    =\frac{(2k\omega_1\omega_3,-2k\omega_2,
    \omega_1^2\omega_3^2+\omega_2^2-k^2)}
    {\omega_1^2\omega_3^2+\omega_2^2+k^2}.
    \]
    One computes first
    \begin{equation}
        \label{eq:radial-energy}
        |\partial_r v_\eps|^2
        =\frac{4k'^2(\omega_1^2\omega_3^2+\omega_2^2)}
        {(\omega_1^2\omega_3^2+\omega_2^2+k^2)^2}.
    \end{equation}
    For the spherical gradient we have
    \begin{equation}
        \label{eq:angular-energy}
        |\nabla_{\mb S^2}v_\eps|^2
        =\frac{4k^2}{(\omega_1^2\omega_3^2+\omega_2^2+k^2)^2}
        \left(|\nabla_{\mb S^2}(\omega_1\omega_3)|^2
        +|\nabla_{\mb S^2}\omega_2|^2\right).
    \end{equation}
    Now
    \begin{equation}
        \label{eq:spherical-gradients}
        |\nabla_{\mb S^2}\omega_2|^2=1-\omega_2^2,\qquad
        |\nabla_{\mb S^2}(\omega_1\omega_3)|^2
        =1-\omega_2^2-4\omega_1^2\omega_3^2.
    \end{equation}
    Combining \eqref{eq:radial-energy}--\eqref{eq:spherical-gradients},
    it follows
    \begin{equation}
        \label{eq:raw-energy}
        r^2|\partial_r v_\eps|^2+|\nabla_{\mb S^2}v_\eps|^2
        =\frac{
        4(r^2k'^2-4k^2)(\omega_1^2\omega_3^2+\omega_2^2)
        +8k^2(1+\omega_2^2)}
        {(\omega_1^2\omega_3^2+\omega_2^2+k^2)^2}.
    \end{equation}
    Next we rewrite this formula using terms which simplify after
    integrating. First, one can compute
    \begin{equation}
        \label{eq:angular-ibp}
        \Delta_{\mb S^2}\log(\omega_1^2\omega_3^2+\omega_2^2+k^2)
        =\frac{4k^2(1-3\omega_1^2\omega_3^2+k^2)}
        {(\omega_1^2\omega_3^2+\omega_2^2+k^2)^2}-4
    \end{equation}
    and
    \begin{equation}
        \label{eq:radial-derivative}
        \begin{aligned}
        8\frac{d}{dr}\left(
        \frac{rk^2}{\omega_1^2\omega_3^2+\omega_2^2+k^2}\right)
        =\frac{8k^2}{\omega_1^2\omega_3^2+\omega_2^2+k^2}
        +\frac{16rkk'(\omega_1^2\omega_3^2+\omega_2^2)}
        {(\omega_1^2\omega_3^2+\omega_2^2+k^2)^2}.
        \end{aligned}
    \end{equation}
    Finally, note that $rk'+2k=-2\eps$, which implies
    \begin{equation}
        \label{eq:k-identity}
        16\eps^2-16rkk'=4r^2k'^2+16k^2.
    \end{equation}
    Starting from \eqref{eq:raw-energy}, we compute

\begin{equation}
\label{eq:energy-decomposition}
\begin{aligned}
&r^2|\partial_r v_\varepsilon|^2
   +|\nabla_{\mb S^2}v_\varepsilon|^2 \\
&=
\frac{
4(r^2k'^2-4k^2)(\omega_1^2\omega_3^2+\omega_2^2)
+8k^2(1+\omega_2^2)
}{
(\omega_1^2\omega_3^2+\omega_2^2+k^2)^2
} \\
&=
\frac{
8k^2(1-3\omega_1^2\omega_3^2+k^2)
}{
(\omega_1^2\omega_3^2+\omega_2^2+k^2)^2
} \\
&\quad
-\frac{
8k^2(\omega_1^2\omega_3^2+\omega_2^2+k^2)
+16rkk'(\omega_1^2\omega_3^2+\omega_2^2)
}{
(\omega_1^2\omega_3^2+\omega_2^2+k^2)^2
} \\
&\quad
+16\frac{
\varepsilon^2(\omega_1^2\omega_3^2+\omega_2^2)
-k^2\omega_2^2
}{
(\omega_1^2\omega_3^2+\omega_2^2+k^2)^2
} \\
&=
8
+2\Delta_{\mb S^2}
\log(\omega_1^2\omega_3^2+\omega_2^2+k^2) \\
&\quad
-8\frac{d}{dr}
\left(
\frac{rk^2}
{\omega_1^2\omega_3^2+\omega_2^2+k^2}
\right) \\
&\quad
+16\frac{
\varepsilon^2(\omega_1^2\omega_3^2+\omega_2^2)
-k^2\omega_2^2
}{
(\omega_1^2\omega_3^2+\omega_2^2+k^2)^2
},
\end{aligned}
\end{equation}
    where we used \eqref{eq:k-identity} in the second and \eqref{eq:angular-ibp}--\eqref{eq:radial-derivative} in the last equality.\\
    For each $0<r<1$, the function
    $\log(\omega_1^2\omega_3^2+\omega_2^2+k^2)$ is smooth on $\mb S^2$.
    Hence, by integration by parts,
    \begin{equation}
        \label{eq:angular-zero}
        \int_{\mb S^2}\Delta_{\mb S^2}
        \log(\omega_1^2\omega_3^2+\omega_2^2+k^2)d\omega=0.
    \end{equation}
    For the radial derivative term in \eqref{eq:energy-decomposition}, integrating first over
    $a<r<b$, where $0<a<b<1$, gives only the boundary contributions
    \[
    -8\left[\int_{\mb S^2}
    \frac{rk^2}{\omega_1^2\omega_3^2+\omega_2^2+k^2}d\omega
    \right]_{r=a}^{r=b}.
    \]
    As $r\downarrow0$, we have
    \[
    0\le\int_{\mb S^2}
    \frac{rk^2}{\omega_1^2\omega_3^2+\omega_2^2+k^2}d\omega
    \le4\pi r\longrightarrow0.
    \]
    As $r\uparrow1$, we have $k(r)\to0$, and the integrand above
    converges to zero almost everywhere and is bounded by $1$.
    Indeed, $\omega_1^2\omega_3^2+\omega_2^2$ vanishes only at
    $\{\pm e_1,\pm e_3\}$. Thus dominated convergence shows that
    \[
    \int_{\mb S^2}
    \frac{rk^2}{\omega_1^2\omega_3^2+\omega_2^2+k^2}\d\omega
    \longrightarrow0\quad\text{as }r\uparrow1.
    \]
    In other words we have
    \begin{equation}
        \label{eq:NoradContr}
        \int_{0}^1\int_{\mb S^2}\frac{d}{dr}\left(\frac{rk^2}{\omega_1^2\omega_3^2+\omega_2^2+k^2}\right)d\omega dr=0.
    \end{equation}
    Before passing to the radial endpoints in
    \eqref{eq:energy-decomposition}, we have to check that the remaining
    term is actually integrable. For its positive contribution, the substitution
    $t=k(r)$ and the inequality $-k'(r)=2\eps/r^3\ge2\eps$ give,
    by Tonelli's theorem,
    \begin{equation}
        \label{eq:positive-correction}
        \begin{aligned}
        &16\eps^2\int_0^1\int_{\mb S^2}
        \frac{\omega_1^2\omega_3^2+\omega_2^2}
        {(\omega_1^2\omega_3^2+\omega_2^2+k^2)^2}\d\omega\d r\\
        &\qquad\le8\eps\int_{\mb S^2}\int_0^\infty
        \frac{\omega_1^2\omega_3^2+\omega_2^2}
        {(\omega_1^2\omega_3^2+\omega_2^2+t^2)^2}\d t\d\omega\\
        &\qquad=2\pi\eps\int_{\mb S^2}
        \frac{1}{\sqrt{\omega_1^2\omega_3^2+\omega_2^2}}\d\omega.
        \end{aligned}
    \end{equation}
    The last integral is finite and independent of $\eps$. Indeed,
    \[
    \omega_1^2\omega_3^2+\omega_2^2
    =(1-\omega_1^2)(1-\omega_3^2),
    \]
    so, near each of $\{\pm e_1,\pm e_3\}$, its reciprocal square root
    is bounded by a constant times the reciprocal of the distance to
    that point, which is integrable on $\mb S^2$.
    The negative contribution in \eqref{eq:energy-decomposition} is also integrable, since
    \[
    0\le
    \frac{k^2\omega_2^2}
    {(\omega_1^2\omega_3^2+\omega_2^2+k^2)^2}
    \le\frac14.
    \]
    We can therefore integrate \eqref{eq:energy-decomposition} over
    $\mb S^2\times(a,b)$ and let $a\downarrow0$ and $b\uparrow1$.
    Using \eqref{eq:angular-zero}--\eqref{eq:NoradContr}, we obtain
    \begin{equation}
        \label{eq:exact-energy}
        E_{B_2}(v_\eps)=E_{B_1}(v_\eps)=32\pi+16\int_0^1\int_{\mb S^2}
        \frac{\eps^2(\omega_1^2\omega_3^2+\omega_2^2)-k^2\omega_2^2}
        {(\omega_1^2\omega_3^2+\omega_2^2+k^2)^2}\d\omega\d r.
    \end{equation}
    It remains to estimate the negative contribution in the integral above. For
    $0<\eps\le1/8$ and $\sqrt\eps\le r\le2\sqrt\eps$, we have
    \[
    \frac18\le\frac14-\eps\le k(r)\le1-\eps\le1.
    \]
    Since $\omega_1^2\omega_3^2+\omega_2^2\le1$, it follows that
    \begin{equation}
        \label{eq:negative-gap}
        \begin{aligned}
        16\int_0^1\int_{\mb S^2}
        \frac{k^2\omega_2^2}
        {(\omega_1^2\omega_3^2+\omega_2^2+k^2)^2}\d\omega\d r
        \ge\frac1{16}\int_{\sqrt\eps}^{2\sqrt\eps}
        \int_{\mb S^2}\omega_2^2\d\omega\d r
        =\frac\pi{12}\sqrt\eps.
        \end{aligned}
    \end{equation}
    Combining \eqref{eq:positive-correction}, \eqref{eq:exact-energy}
    and \eqref{eq:negative-gap}, we conclude that
    \[
    E_{B_2}(v_\eps)\le32\pi-\frac\pi{12}\sqrt\eps
    +2\pi\eps\int_{\mb S^2}
    \frac{1}{\sqrt{\omega_1^2\omega_3^2+\omega_2^2}}\d\omega
    <32\pi
    \]
    for all sufficiently small $\eps>0$. This completes the proof.
    \end{proof}

    \section{Proof of the main result}
    \label{sec:Main}
    \begin{lemma}
        \label{lem:RotPreserveDiv}
        Let $v \in H_{loc}^1(\mb R^3;\mb S^2)$ and $\psi \in C_c^\infty(\mb R^3;\mb R^3)$. Suppose $v_t$ solves
        \[
        \partial_t v_t =\psi \times v_t,\quad v_0=v.
        \]
        Then we have $\di D(v_t)=\di D(v)$ in $\mc D'(\mb R^3)$.
    \end{lemma}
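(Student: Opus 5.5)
The plan is to write $v_t=M_t(x)\,v(x)$ with $M_t\in C^\infty(\mb R^3;\SO(3))$, to recognize $D(v)$ as the pullback of the area form of $\mb S^2$, and to show that $D(v_t)-D(v)$ is a distributional curl, whose divergence then vanishes. Since the ODE $\partial_t v_t=\psi\times v_t$ is linear and pointwise in $x$, its solution is $v_t(x)=M_t(x)v(x)$. Here $M_t(x)$ solves $\partial_tM_t=[\psi]_\times M_t$, $M_0=\Id$. Consequently $M_t$ is smooth in $(t,x)$, takes values in $\SO(3)$, and $M_t\equiv\Id$ outside $\operatorname{supp}\psi$. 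In particular $v_t\in H^1_{loc}(\mb R^3;\mb S^2)$, since $\nabla v_t=M_t\nabla v+(\nabla M_t)v$.

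\textbf{Step 1 (forms).} Let $\omega$ be the area form on $\mb S^2$. Identifying vector fields with $2$-forms on $\mb R^3$ via the Hodge star, $D(v)$ corresponds to $v^*\omega$ and $\di D(v)$ to $d(v^*\omega)$. Consider
\[
G_t:\mb R^3\times\mb S^2\to\mb S^2,\qquad G_t(x,y)=M_t(x)y,
\]
and the projection $\pi(x,y)=y$. Then $v_t=G_t\circ(\Id,v)$ and $v=\pi\circ(\Id,v)$. Since $\partial_tG_t=\Psi\circ G_t$, where $\Psi(x,y)=\psi(x)\times y$ is tangent to $\mb S^2$, the homotopy (Cartan) formula together with $d\omega=0$ gives
\[
G_t^*\omega-\pi^*\omega=d\beta_t,\qquad \beta_t=\int_0^t G_s^*(\iota_{\Psi}\omega)\,ds .
\]
The $1$-form $\beta_t$ is smooth on $\mb R^3\times\mb S^2$ and vanishes for $x\notin\operatorname{supp}\psi$, because $\Psi=0$ there.

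\textbf{Step 2 (pulling back by an $H^1$ map).} I want to conclude
\[
v_t^*\omega-v^*\omega=(\Id,v)^*d\beta_t=d\bigl((\Id,v)^*\beta_t\bigr)\quad\text{in }\mc D'.
\]
The right-hand $1$-form $A:=(\Id,v)^*\beta_t$ has coefficients that are smooth in $(x,v)$ and linear in $\nabla v$. Hence $A\in L^2_{loc}$, and it has compact support. Given the displayed identity, $D(v_t)-D(v)=\operatorname{curl}A$ in the sense of distributions, and therefore $\di D(v_t)-\di D(v)=\di\operatorname{curl}A=0$ in $\mc D'(\mb R^3)$, which is the claim.

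\textbf{Step 3 (main obstacle: $d$ commuting with pullback for $H^1$ maps).} Smooth $\mb S^2$-valued maps are not dense in $H^1(B;\mb S^2)$ in three dimensions, so I would not approximate within the sphere. Instead:
\begin{enumerate}
\item Extend $\beta_t$ to a smooth compactly-in-$x$ supported form $\tilde\beta_t$ on $\mb R^3\times U$, where $U$ is a neighborhood of $\mb S^2$. For instance, pull back along the radial retraction and multiply by a cutoff; then $\tilde\beta_t$ is defined on all of $\mb R^3\times\mb R^3$.
\item Approximate $v$ by smooth $\mb R^3$-valued maps $v_k$ with $v_k\to v$ in $H^1_{loc}$, $|v_k|\le C$, and a.e.
\item For the smooth maps, $d\bigl((\Id,v_k)^*\tilde\beta_t\bigr)=(\Id,v_k)^*d\tilde\beta_t$ holds classically.
\item Pass to the limit. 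The left-hand forms are linear in $\nabla v_k$, so they converge in $L^1_{loc}$. The right-hand forms are at most quadratic in $\nabla v_k$ with bounded, a.e. convergent coefficients, so they converge in $L^1_{loc}$ by strong $H^1$ convergence and dominated convergence.
\end{enumerate}
The limit equals $(\Id,v)^*d\tilde\beta_t$. Because $\nabla v$ is a.e. tangent to $\mb S^2$ at $v$ (from $|v|=1$), only the restriction of $d\tilde\beta_t$ to $T(\mb R^3\times\mb S^2)$ enters, and this restriction is $d\beta_t=G_t^*\omega-\pi^*\omega$. The same tangency observation shows $(\Id,v)^*G_t^*\omega=v_t^*\omega$ a.e., so the formal chain rule $G_t^*\circ(\Id,v)^*$ is legitimate pointwise. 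This step is where care is needed; the rest is bookkeeping.
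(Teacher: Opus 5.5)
Your argument is correct, but it is organized differently from the paper's. The paper differentiates in $t$. Using $\partial_t\partial_i v_t=\partial_i\psi\times v_t+\psi\times\partial_i v_t$, it computes $\partial_t D_1(v_t)$ pointwise. The terms without $\nabla\psi$ cancel by rotation invariance of the triple product. The remainder $\partial_y\psi\cdot\partial_z v_t-\partial_z\psi\cdot\partial_y v_t$ is then recognized as a component of $\operatorname{curl}(\psi\cdot\partial_x v_t,\psi\cdot\partial_y v_t,\psi\cdot\partial_z v_t)$, using only that weak mixed derivatives commute. Taking the divergence gives $\partial_t\di D(v_t)=0$.

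You instead integrate in $t$ from the outset through the homotopy formula. As written, $\Psi(x,y)=\psi(x)\times y$ is not a vector field on $\mb S^2$, so $\Psi\circ G_t$ does not typecheck. What you want is the flow $\Phi_t(x,y)=(x,M_t(x)y)$ of the field $(0,\psi(x)\times y)$ on $\mb R^3\times\mb S^2$, applied to the closed form $\pi^*\omega$, with $G_t=\pi\circ\Phi_t$. Unwinding your primitive, on $T(\mb R^3\times\mb S^2)$ one has $\iota_{(0,\psi\times y)}\pi^*\omega=\psi(x)\cdot dy$, hence $(\Id,v)^*\beta_t=\int_0^t\psi\cdot dv_s\,ds$. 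So your $A$ is exactly the time integral of the paper's curl potential.

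Your route yields $D(v_t)-D(v)=\operatorname{curl}A$ directly. You never have to differentiate $t\mapsto D(v_t)$ in $L^1_{loc}$ or commute $\partial_t$ with the distributional divergence. Since it never uses that rotations preserve $\omega$, it also applies verbatim to any smooth $x$-dependent homotopy of self-maps of $\mb S^2$ starting at the identity.

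The paper's route is more economical. The potential is explicit and linear in $\nabla v_s$, so taking its curl is a linear operation on $H^1_{loc}$ functions. Your Step 3 (extending $\beta_t$ off $\mb R^3\times\mb S^2$ and approximating by smooth $\mb R^3$-valued maps) is therefore valid but dispensable. Once $A$ is written down, $\operatorname{curl}A$ can be computed with commuting weak derivatives and matched pointwise with $(\Id,v)^*d\beta_t$ by your tangency observation.
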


    \begin{proof}
        The flow can be written as $v_t=Q_tv$, where $Q_t$ is a
        smooth field of rotations. Thus, $t\mapsto v_t$ is differentiable
        locally in $H^1$ and $L^\infty$, and $t\mapsto D(v_t)$ is
        differentiable in $L^1_{\mathrm{loc}}$. We claim
        \begin{equation}\label{eq:rotation-curl}
        \partial_tD(v_t)
        =\operatorname{curl}
        \bigl(\psi\cdot\partial_xv_t,\,
              \psi\cdot\partial_yv_t,\,
              \psi\cdot\partial_zv_t\bigr),\quad \text{in }\mc D'(\mb R^3).
        \end{equation}
        We verify \eqref{eq:rotation-curl} for the first component, the other components follow similarly. Differentiate
        $D_1(v_t)=v_t\cdot(\partial_yv_t\times\partial_zv_t)$, using
        \[
        \partial_t\partial_i v_t
        =\partial_i\psi\times v_t+\psi\times\partial_i v_t.
        \]
        The terms without derivatives of $\psi$ cancel by rotational
        invariance of the scalar triple product. Since $|v_t|=1$ and
        $v_t\cdot\partial_i v_t=0$, the remaining terms give
        \[
        \begin{aligned}
        \partial_tD_1(v_t)
        &=v_t\cdot\bigl(
          (\partial_y\psi\times v_t)\times\partial_zv_t
          +\partial_yv_t\times(\partial_z\psi\times v_t)
          \bigr)\\
        &=\partial_y\psi\cdot\partial_zv_t
          -\partial_z\psi\cdot\partial_yv_t\\
        &=\partial_y(\psi\cdot\partial_zv_t)
          -\partial_z(\psi\cdot\partial_yv_t).
        \end{aligned}
        \]
        The last equality holds distributionally because mixed weak
        derivatives commute. \\
        Taking the distributional divergence in
        \eqref{eq:rotation-curl} yields
        \[
        \partial_t\operatorname{div}D(v_t)=0.
        \]
        Since $v_0=v$, we conclude that
        $\operatorname{div}D(v_t)=\operatorname{div}D(v)$ for every $t$.
    \end{proof}

    \begin{proof}[Proof of Theorem \ref{thm:main}]
        Pick $\eps>0$ sufficiently small such that by Proposition \ref{prop:gap} we have
        \begin{equation}
            \label{eq:mSmall}
            m:=\inf_{v\in \mc C} E_{B_2}(v)\leq E_{B_2}(v_\eps)<32\pi.
        \end{equation}
        Let $(v_n)_{n\in \mb N}\subset \mc C$ be a minimizing sequence, that is $E_{B_2}(v_n)\to m$. Since $|v_n|=1$, the sequence is bounded in $H^1(B_2)$. Thus, after taking subsequences which we do not relabel, we have
        \begin{equation}
            \label{eq:ConvMin}
            v_n \rightharpoonup u,\quad \text{in }H^1(B_2),\quad v_n\to u,\quad \text{in }L^2(B_2)\text{ and almost everywhere.}
        \end{equation}
        Note that this implies $|u|=1$ almost everywhere and $u-e_3 \in H^1_0(B_2;\mb R^3)$. We claim that $u$ is weakly harmonic.\\
        Fix for the moment a test function $\psi \in C_c^\infty(B_2;\mb R^3)$ which satisfies also the symmetry condition $\psi \circ g=R\circ \psi$. Extend $\psi$ by $0$ to $\mb R^3$. Let $Q_t(x) \in SO(3)$ be the smooth field of rotations which is determined through the equations
        \[
        \partial_t(Q_t(x)a)=\psi(x)\times Q_t(x)a,\quad Q_0(x)=\Id,\quad \text{for all }a \in \mb R^3.
        \]
        For every $t \in \mb R$, we have $Q_tv_n \in \mc C$. Indeed, it is simple to see that $Q_t v_n$ has the desired symmetry of the admissible class $\mc C$ and equals the constant $e_3$ outside $B_2$. Moreover, $Q_tv_n - e_3 \in W^{1,p}(\mb R^3;\mb R^3)$ and by Lemma \ref{lem:RotPreserveDiv}, the prescribed singularities are preserved.\\
        Thus,
        \begin{equation}
            \label{eq:RotAdmComp}
            E_{B_2}(Q_t v_n)- E_{B_2}(v_n) \geq m - E_{B_2}(v_n).
        \end{equation}
        Moreover, orthogonality of $Q_t$ gives
        \begin{equation}
            \label{eq:EnergyDiff}
            E_{B_2}(Q_t v_n)- E_{B_2}(v_n)=2\sum_{i=1}^3\int_{B_2}\partial_i v_n\cdot (Q_t^T(\partial_i Q_t)v_n)dx+\sum_{i=1}^3\int_{B_2}|(\partial_i Q_t)v_n|^2dx.
        \end{equation}
        From \eqref{eq:ConvMin}, we have $Q_t^T(\partial_i Q_t)v_n \to Q_t^T(\partial_i Q_t)u$ and $(\partial_i Q_t)v_n \to (\partial_i Q_t)u$ strongly in $L^2(B_2)$. Moreover, $\partial_i v_n \rightharpoonup \partial_i u$ weakly. This convergence is enough to pass to the limit in \eqref{eq:EnergyDiff}. Since this energy formula of course also holds for $u$, we have together with \eqref{eq:RotAdmComp} that
        \[
        E_{B_2}(Q_t u)-E_{B_2}(u)=\lim_{n \to \infty}(E_{B_2}(Q_t v_n)-E_{B_2}(v_n)) \geq \lim_{n \to \infty}(m-E_{B_2}(v_n))=0.
        \]
        Since $\p_t (Q_t u)|_{t=0}=\psi \times u$, differentiating at $t=0$ gives
        \[
        0=\frac{d}{dt}\bigg|_{t=0}E_{B_2}(Q_t u)=2\int_{B_2}\sum_{i=1}^3 (u \times \partial_i u)\cdot \partial_i \psi dx,
        \]
        so
        \begin{equation}
            \label{eq:IntHarmSymm}
            \int_{B_2}\sum_{i=1}^3 (u \times \partial_i u)\cdot \partial_i \psi dx=0,
        \end{equation}
        for any $\psi \in C_c^\infty(B_2;\mb R^3)$, which satisfies $\psi \circ g=R\circ \psi$. This identity holds for arbitrary $\psi \in C_c^\infty(B_2;\mb R^3)$ as well: First note that $u$ satisfies $u \circ g=R \circ u$ almost everywhere from the convergence \eqref{eq:ConvMin}. If we write $I(\psi)$ for the integral in \eqref{eq:IntHarmSymm}, change of variables and the symmetry of $u$ give
        \[
        I(R^{-1}\psi \circ g)=I(\psi).
        \]
        Hence, by linearity of $I(\psi)$ in $\psi$
        \[
        I(\psi)=I\left(\frac{\psi+R^{-1}\psi \circ g}{2}\right)=0,
        \]
        where we used that $(\psi+R^{-1}\psi \circ g)/2 \in C_c^\infty(B_2;\mb R^3)$ is a symmetric test function. Thus, we have shown that
        \[
        \int_{B_2}\sum_{i=1}^3 (u \times \partial_i u)\cdot \partial_i \psi dx=0,\quad \forall \psi \in C_c^\infty(B_2;\mb R^3).
        \]
        It is well known that this is equivalent to weak harmonicity of $u$; see, for example, \cite[Section~1.3]{Helein2002}.\\
        Finally, if we assume that $u$ is constant, its trace would give that $u=e_3$. But then we have from \eqref{eq:ConvMin}, Proposition \ref{prop:Constant-expensive} and \eqref{eq:mSmall} that
        \[
        32\pi \leq \liminf_{n \to \infty} E_{B_2}(v_n)=m < 32\pi,
        \]
        which is the desired contradiction. This completes the proof.
    \end{proof}

    \begin{remark}
        Note that our proof does not give any information on the singular set of $u$, except that it is nonempty. It would be interesting to understand if $u$ for example retains the prescribed singularities from the class $\mc C$.
    \end{remark}

    \noindent\textbf{AI Disclosure.}
    The author acknowledges the use of AI tools in the preparation of this work. The author asked ChatGPT 5.6 Sol to construct a competitor with the desired properties in Proposition \ref{prop:trial} and Proposition \ref{prop:gap}. The AI suggested a more complicated competitor and gave proofs of the desired properties. The author simplified the competitor and substantially rewrote the proofs. This paper was written and checked by the author, who takes full responsibility for its correctness.

	\let\oldthebibliography\thebibliography
	\let\endoldthebibliography\endthebibliography
	\renewenvironment{thebibliography}[1]{
	\begin{oldthebibliography}{#1}
	\setlength{\itemsep}{0.5pt}
	\setlength{\parskip}{0.5pt}
	}
	{
	\end{oldthebibliography}
	}
	
	{\small
	\IfFileExists{abbrv-dz.bst}{\bibliographystyle{abbrv-dz}}{\bibliographystyle{abbrv}}
	\bibliography{sources}

@article{Pakzad2001,
  author  = {Pakzad, Mohammad Reza},
  title   = {Existence of infinitely many weakly harmonic maps
             from a domain in {$\mathbb{R}^n$} into {$S^2$}
             for non-constant boundary data},
  journal = {Calculus of Variations and Partial Differential Equations},
  volume  = {13},
  pages   = {97--121},
  year    = {2001},
  doi     = {10.1007/PL00009925}
}

@article{ever,
  author  = {Rivi\`ere, Tristan},
  title   = {Everywhere discontinuous harmonic maps into spheres},
  journal = {Acta Mathematica},
  volume  = {175},
  number  = {2},
  year    = {1995},
  pages   = {197--226},
  doi     = {10.1007/BF02393305}
}

@incollection{BBC,
  author    = {Bethuel, Fabrice and Brezis, Ha{\"i}m and Coron, Jean-Michel},
  title     = {Relaxed energies for harmonic maps},
  booktitle = {Variational Methods: Proceedings of a Conference, Paris, June 1988},
  editor    = {Berestycki, Henri and Coron, Jean-Michel and Ekeland, Ivar},
  series    = {Progress in Nonlinear Differential Equations and Their Applications},
  volume    = {4},
  publisher = {Birkh{\"a}user},
  address   = {Boston, MA},
  year      = {1990},
  pages     = {37--52},
  doi       = {10.1007/978-1-4757-1080-9_3}
}

@article{HLP,
  author  = {Hardt, Robert and Lin, Fang-Hua and Poon, Chi-Cheung},
  title   = {Axially symmetric harmonic maps minimizing a relaxed energy},
  journal = {Communications on Pure and Applied Mathematics},
  volume  = {45},
  number  = {4},
  year    = {1992},
  pages   = {417--459},
  doi     = {10.1002/cpa.3160450404}
}

@article{Rline,
  author  = {Rivi\`ere, Tristan},
  title   = {Applications harmoniques de {$B^3$} dans {$\mb S^2$} ayant une ligne de singularit{\'e}s},
  journal = {Comptes Rendus de l'Acad{\'e}mie des Sciences. S{\'e}rie I. Math{\'e}matique},
  volume  = {313},
  number  = {9},
  year    = {1991},
  pages   = {583--587}
}

@article{BCL,
  author  = {Brezis, Ha{\"i}m and Coron, Jean-Michel and Lieb, Elliott H.},
  title   = {Harmonic maps with defects},
  journal = {Communications in Mathematical Physics},
  volume  = {107},
  number  = {4},
  year    = {1986},
  pages   = {649--705},
  doi     = {10.1007/BF01205490}
}

@book{Helein2002,
  author    = {H{\'e}lein, Fr{\'e}d{\'e}ric},
  title     = {Harmonic Maps, Conservation Laws and Moving Frames},
  series    = {Cambridge Tracts in Mathematics},
  volume    = {150},
  edition   = {Second},
  publisher = {Cambridge University Press},
  address   = {Cambridge},
  year      = {2002},
  doi       = {10.1017/CBO9780511543036},
  isbn      = {978-0-521-81160-6}
}

@article{KarcherWood1984,
  author  = {Karcher, Hermann and Wood, John C.},
  title   = {Non-existence results and growth properties for harmonic maps and forms},
  journal = {Journal f{\"u}r die reine und angewandte Mathematik},
  volume  = {353},
  year    = {1984},
  pages   = {165--180},
  url     = {https://eudml.org/doc/152670}
}

@article{GMS1989,
  author  = {Giaquinta, M. and Modica, G. and Sou{\v{c}}ek, J.},
  title   = {Cartesian currents and variational problems for mappings into spheres},
  journal = {Annali della Scuola Normale Superiore di Pisa, Classe di Scienze (4)},
  volume  = {16},
  number  = {3},
  year    = {1989},
  pages   = {393--485},
  url     = {https://numdam.org/item/ASNSP_1989_4_16_3_393_0/}
}

@article{ShenYan1993,
  author  = {Shen, Yaotian and Yan, Shusen},
  title   = {Constant boundary value problem for a quasilinear elliptic system},
  journal = {Manuscripta Mathematica},
  volume  = {79},
  number  = {1},
  year    = {1993},
  pages   = {99--112},
  doi     = {10.1007/BF02568331}
}

@article{ChouZhu1995,
  author  = {Chou, Kai Seng and Zhu, Xi-Ping},
  title   = {Some constancy results for nematic liquid crystals and harmonic maps},
  journal = {Annales de l'Institut Henri Poincar{\'e} C, Analyse non lin{\'e}aire},
  volume  = {12},
  number  = {1},
  year    = {1995},
  pages   = {99--115},
  doi     = {10.1016/S0294-1449(16)30169-X}
}

@article{BethuelBrezis1991,
  author  = {Bethuel, Fabrice and Brezis, Ha{\"i}m},
  title   = {Regularity of minimizers of relaxed problems for harmonic maps},
  journal = {Journal of Functional Analysis},
  volume  = {101},
  number  = {1},
  year    = {1991},
  pages   = {145--161},
  doi     = {10.1016/0022-1236(91)90152-U}
}
	}

\end{document}